\documentclass[reqno]{amsart}
\usepackage{amsmath}

\usepackage{amsfonts}
\usepackage{amssymb}
\usepackage{hyperref}
\usepackage{centernot}
\usepackage[a4paper,top=3.5cm,bottom=3cm,left=3.5cm,right=3.5cm]{geometry}

\setcounter{MaxMatrixCols}{10}

\begin{document}
\title[Positive solutions of $p$-Laplacian fractional differential equations]{Positive solutions of $p$-Laplacian fractional differential equations with fractional derivative boundary condition}
\author[F. Haddouchi]{Faouzi Haddouchi}
\address{
Faculty of Physics, University of Sciences and Technology of
Oran-MB, El Mnaouar, BP 1505, 31000 Oran, Algeria
\newline
And
\newline
Laboratoire de Math\'ematiques Fondamentales et Appliqu\'ees d'Oran (LMFAO). Universit\'e Oran1. B.P. 1524 El Mnaouer, Oran, Alg\'erie.}

\email{fhaddouchi@gmail.com; fouzi.haddouchi@univ-usto.dz}
\email{}
\email{}
\subjclass[2010]{34A08, 26A33, 34B18}
\keywords{Caputo fractional differential equations, $p$-Laplacian operator, positive solutions, fixed-point theorem, existence, cone}

\begin{abstract}
In this paper, we show some results about the existence and the uniqueness of the positive solution for a $p$-Laplacian fractional differential equations with fractional derivative boundary condition. Our results are based on Krasnosel'skii's fixed point theorem, the nonlinear alternative of Leray-Schauder type and contraction mapping principle. Three examples are given to illustrate the applicability of our main results.

\end{abstract}

\maketitle \numberwithin{equation}{section}
\newtheorem{theorem}{Theorem}[section]
\newtheorem{lemma}[theorem]{Lemma}
\newtheorem{definition}[theorem]{Definition}
\newtheorem{proposition}[theorem]{Proposition}
\newtheorem{corollary}[theorem]{Corollary}
\newtheorem{remark}[theorem]{Remark}
\newtheorem{exmp}{Example}[section]

\section{Introduction\label{sec:1}}

Fractional derivatives and integrals are proved to be more useful in the modeling of different physical and natural
phenomena. The $p$-Laplacian fractional boundary value problems related to nonlocal conditions have many applications in various fields, such as in the theory of heat conduction in materials with memory, non-Newtonian mechanics,
nonlinear elasticity and glaciology, combustion theory, population biology, nonlinear flow laws and so on.
For instance, when studying the steady-state turbulent flow with reaction, Bobisud (cf.\cite{Bobi}) introduced the differential equation
\[\big(\varphi_{p}\big(u^{\prime}(t)\big)\big)^{\prime}=f(t,u(t),u^{\prime}(t))\]
with an operator $\varphi_{p}(x)=|x|^{p-1}x$. This problem appears in the study of non-Newtonian fluids. It yields
the usual problem for diffusion in a porous medium when $p=1$, i.e., $\varphi_{p}(x)=x$.
The current analysis of these problems has a great interest and many methods are used to solve such problems.
Recently, the study of existence of positive solution to $p$-Laplacian fractional boundary value problems has gained much attention and is rapidly growing field. There are many papers concerning fractional differential equations with the $p$-Laplacian operator, see \cite{Hu, Khan3,Luca,Yupin, Nazim,Yunhong,Zhenlai,Hongling,Yang,Shen}.

In 2013, S. Ying et al. \cite{Ying} considered the existence criteria for positive solutions of the nonlinear
$p$-Laplacian fractional differential equation
  \begin{equation*}\label{}
 \begin{cases}
     \Big(\varphi_{p}\big(D^{\alpha}u(t)\big)\Big)^{\prime}=\varphi_{p}(\lambda)f(t,u(t),u^{\prime}(t)),\  t \in (0,1),\\
    k_{0}u(0) = k_{1}u(1), \   m_{0}u(0) = m_{1}u(1),\  u^{(r)}(0)=0,\ r=2,3,...,[\alpha],
  \end{cases}
    \end{equation*}
where $\varphi_{p}$ is the $p$-Laplacian operator, i.e., $\varphi_{p}(s) =|s|^{p-2}s$, $p>1$, and $\varphi_{q}=\varphi_{p}^{-1}$, $\frac{1}{p}+\frac{1}{q}=1$. $D^{\alpha}$ is the standard Caputo derivative and $f:[0,1]\times[0,\infty)\times(-\infty,\infty)\rightarrow[0,\infty)$ satisfies the Carath�odory type condition, $\alpha>2$ is real and $[\alpha]$ denotes the integer part of the real number $\alpha$, $\lambda>0$, $k_{i}, m_{i}$ ($i=0,1)$ are constants satisfying $0<k_{1}<k_{0}$ and $0<m_{1}<m_{0}$. They used the nonlinear alternative of
Leray-Schauder type and the fixed-point theorems in Banach space to investigate the existence of at least single, twin, triple, $n$ or $2n-1$ positive solutions.

In 2019, T. Xiaosong et al. \cite{Xiaosong} considered the following mixed fractional resonant boundary value
problem with $p(t)$-Laplacian operator
\begin{equation*}\label{}
 \begin{cases}
    ^{c}D^{\beta}\Big(\varphi_{p(t)}\big(D^{\alpha}u(t)\big)\Big)=f(t,u(t),D^{\alpha}u(t)),\  t \in [0,T],\\
    t^{1-\alpha}u(t)\big|_{t=0}=0,\ D^{\alpha}u(0)=D^{\alpha}u(T), \\
  \end{cases}
  \end{equation*}
where $0<\alpha,\beta\leq 1$, $1<\alpha+\beta\leq 2$, $^{c}D^{\beta}$ is Caputo fractional derivative and $D^{\alpha}$ is Riemann-Liouville fractional derivative, $\varphi_{p(t)}(.)$ is $p(t)$-Laplacian operator, $p(t)>1$, $p(t)\in C^{1}[0,T]$ with $p(0)=p(T)$, $f:[0,T]\times \mathbb{R}^{2}\rightarrow \mathbb{R}$. Under the appropriate conditions of the nonlinear term, the existence of solutions for the above mixed fractional resonant boundary value problem is obtained by using the continuation theorem of coincidence degree theory.

In 2019, Z. Li et al. \cite{Li} considered the existence of nontrivial solutions for a certain $p$-Laplacian fractional differential equation

\begin{equation*}\label{}
 \begin{cases}
     D^{\beta}\varphi_{p}\Big(\big( D^{\alpha}\big(p(t)u^{\prime}(t)\big)\big)\Big)+f(t,u(t))=0,\  t \in (0,1),\\
    au(0)-bp(0)u^{\prime}(0)=0,\ cu(1)+dp(1)u^{\prime}(1)=0,\  D^{\alpha}\big(p(t)u^{\prime}(t)\big)\big|_{t=0}=0,
  \end{cases}
    \end{equation*}
where $a$, $b$, $c$, $d$ are constants with $0<ad+bc+ac\int_{0}^{1}\frac{1}{p(s)}ds<\infty$, $p(.):[0,1]\rightarrow (0,\infty)$ is continuous, and  $\varphi_{p}$ is the $p$-Laplacian operator, $0<\alpha, \beta<1$, $D^{\alpha}$ is the standard Caputo derivative. Under the assumption that $f(t,u)$ is a continuous function, and by the use of some fixed point theorems in cones, they studied the existence and uniqueness results.

In 2017, T. Yuansheng et al. \cite{Yuansheng} considered a class of four-point boundary value problem of fractional differential equations with $p$-Laplacian operator
\begin{equation*}\label{}
 \begin{cases}
     D^{\gamma}\Big(\varphi_{p}\big( D^{\alpha}u(t)\big)\Big)=f(t,u(t)),\  t \in (0,1),\\
    u(0)=D^{\alpha}u(0)=0,\ D^{\beta}u(1)=\lambda u(\xi),\ D^{\alpha}u(1)=\mu D^{\alpha}u(\eta),
  \end{cases}
    \end{equation*}
where $\alpha,\beta,\gamma \in \mathbb{R}$, $1<\alpha, \gamma \leq2$, $\beta>0$, $1+\beta\leq \alpha$, and $\xi,\eta \in(0,1)$, $\lambda, \mu \in[0,\infty)$, $(1-\beta)\Gamma(\alpha)\geq \lambda \Gamma(\alpha-\beta)\xi^{\alpha-2}$, $1-\mu^{p-1}\eta^{\gamma-2}\geq0$ and $\varphi_{p}$ is the $p$-Laplacian operator, $D^{\nu}$ ($\nu \in\{\alpha,\beta,\gamma\}$) is the standard Riemann-Liouville differentiation and $f\in C([0,1]\times [0,\infty)\rightarrow[0,\infty))$. By the use of the Leggett-Williams fixed-point theorem, the multiplicity results of positive solution are obtained.

And in the same year, L. Xiping et al. \cite{Xiping}, studied the following four-point boundary value problem of fractional differential equation with mixed fractional derivatives and $p$-Laplacian operator
\begin{equation*}\label{}
 \begin{cases}
    D^{\alpha}\Big(\varphi_{p}\big(^{c}D^{\beta}u(t)\big)\Big)=f\big(t,u(t),^{c}D^{\beta}u(t)\big),\  t \in (0,1],\\
    ^{c}D^{\beta}u(0)= u^{\prime}(0)=0,\\
  u(1)=r_{1}u(\eta),\ ^{c}D^{\beta}u(1)=r_{2}\ ^{c}D^{\beta}u(\xi),
  \end{cases}
    \end{equation*}
where $1< \alpha,\beta\leq 2$, $r_{1},r_{2}\geq0$, $D^{\alpha}$ is  the  Riemann-Liouville  fractional  derivative  operator, and $^{c}D^{\beta}$ is  the  Caputo  fractional  derivative  operator, $p >1$, $\varphi_{p}$ is  the $p$-Laplacian  operator and $f\in C([0,1]\times[0,+\infty)\times(-\infty,0],[0,+\infty))$. By means a method of lower and upper solutions, the authors established some new results on the existence of positive solutions.

In 2018, W. Han et al. \cite{Han} investigated the multiple positive solutions for the following nonlinear fractional differential equations coupled with the $p$-Laplacian operator and infinite-point boundary value conditions
\begin{equation*}\label{}
 \begin{cases}
    -D^{\beta}\Big(\varphi_{p}\big( D^{\alpha}u(t)-\lambda u(t)\big)\Big)=f(t,u(t)),\  t \in (0,1],\\
    \lim_{t\rightarrow 0^{+}}t^{1-\alpha}u(t)=\sum_{i=1}^{\infty}\mu_{i}u(\xi_{i}),\\
 \lim_{t\rightarrow 0^{+}}t^{2-\beta}\Big(\varphi_{p}\big( D^{\alpha}u(t)-\lambda u(t)\big)\Big)=\varphi_{p}\big( D^{\alpha}u(1)-\lambda u(1)\big)=0,
  \end{cases}
    \end{equation*}
where $0<\alpha\leq1$, $1<\beta\leq2$, $\lambda<0$, $\mu_{i}\geq0$, $0<\xi_{i}<1$, $i\in\mathbb{N}^{+}$, $\sum_{i=1}^{\infty}\mu_{i}\xi_{i}^{\alpha-1}<1$, $f\in C([0,1]\times [0,\infty)\rightarrow[0,\infty))$, $\varphi_{p}$ is the $p$-Laplacian operator and $D^{\alpha}$, $D^{\beta}$ are the Riemann-Liouville
fractional derivatives. By means of the properties of Green's function and fixed point theorems,
they established the suitable criteria to guarantee the existence of positive solutions.

 In 2018, J. Tan, M. Li \cite{Tan}, studied the solutions for the following nonlinear fractional differential equations
with $p$-Laplacian operator nonlocal boundary value problem in a Banach space $E$
\begin{equation*}\label{}
 \begin{cases}
    -D^{\beta}\Big(\varphi_{p}\big(D^{\alpha}u\big)\Big)(t)=f(t,u(t)),\  t \in (0,1),\\
    u(0)=\theta,\ D^{\alpha}u(0)=\theta, \\
  D^{\gamma}u(1)=\sum_{i=1}^{m-2}\alpha_{i}D^{\gamma}u(\xi_{i}),
  \end{cases}
    \end{equation*}
where $D^{\alpha}, D^{\beta}, D^{\gamma}$ are the standard Riemann-Liouville fractional derivatives, $\theta$ is the zero element of $E$, $1<\alpha\leq2$, $0<\beta,\gamma\leq1$, $\alpha-\gamma\geq1$, $I=[0,1]$, $f:I\times E\rightarrow E$ is continuous, $\alpha_{i}\geq 0$ ($i=1,2,...,m-2$), $0<\xi_{1}<\xi_{2}<...<\xi_{m-2}<1$, $\sum_{i=1}^{m-2}\alpha_{i}\xi_{i}^{\alpha-\gamma-1}<1$, $\varphi_{p}$ is the $p$-Laplacian operator. By means of the technique of the properties of the Kuratowski noncompactness measure and the Sadovskii fixed point theorem, they established some new existence criteria.

For some other results on $p$-Laplacian fractional boundary value problems, we refer the reader to the papers
\cite{Wang,Perera,Perera2,Pucci,Chen,Hu2,Liu,Khan2,Cheng,Prasad}.

 Motivated by these works, in this paper, we are concerned with the following three-point boundary value problem of fractional differential equations with $p$-Laplacian operator
 \begin{equation}\label{eq1}
  \Big(\varphi_{p}\big(D^{\alpha}u(t)\big)\Big)^{\prime} +a(t)f(t,u(t)) = 0,\  t \in (0,1),
  \end{equation}
  \begin{equation}\label{eq2}
    D^{\alpha}u(0)= u^{\prime} (0) =u^{\prime \prime}(0) =0,\ u(1)+ u^{\prime }(1)=u^{\prime}(\eta),
  \end{equation}
  where $2<\alpha\leq3$, $\eta\in(0,1)$, $\varphi_{p}$ is the $p$-Laplacian operator, $D^{\alpha}$ is the standard Caputo derivative.
\newline
Throughout this paper, we assume the following conditions:
  \begin{itemize}
 \item[(H1)] $f \in C([0,1]\times [0,\infty) ,[0,\infty)),$ and $f(t,0)\centernot\equiv 0$ on $[0,1];$
\item[(H2)] $a \in C([0,1],[0,\infty))$ and $a(t)\centernot\equiv 0$ on any subinterval of $[0,1].$
\end{itemize}

 This paper is organized as follows. In section 2, we recall some special definitions, theorems and lemmas that will be used to prove our main results. In section 3, we discuss the existence and uniqueness of positive solution for  \eqref{eq1}-\eqref{eq2}. Finally, we give some examples to illustrate our results in section 4.

 \section{Preliminaries}
In this section, we introduce first the following preliminary facts that will be used throughout this article.
\\
At first, let $\mathcal{B}$ be the Banach space $C([0,1],\mathbb{R})$ when equipped with the usual supremum norm,
 \[\|u\|=\\sup_{t\in[0, 1]}|u(t)|.\]
 \begin{definition}
 Let $E$ be a real Banach space. A nonempty, closed, convex set $
 K\subset E$ is a cone if it satisfies the following two conditions:
 \begin{itemize}
 \item[(i)]
  $x\in K$, $\lambda \geq 0$ imply $\lambda x\in K$;
 \item[(ii)]
 $x\in K$, $-x\in K$ imply $x=0$.
 \end{itemize}
 \end{definition}

 \begin{definition}
 An operator $T:E\rightarrow E$ \ is completely continuous if it is continuous
 and maps bounded sets into relatively compact sets.
 \end{definition}
 \begin{definition}
 $ A $ function $ u(t)$ is called a positive solution of \eqref{eq1} and \eqref{eq2}
 if $ u \in C ([0,1]) $ and $ u(t) > 0 $ for all $ t \in(0,1).$
 \end{definition}

\begin{definition}
The Riemann-Liouville fractional integral of order $\alpha$ for a continuous function $f$ is defined as
\begin{equation*}
I^{\alpha}f(t)=\frac{1}{\Gamma(\alpha)}\int_{0}^{t}\frac{f(s)}{(t-s)^{1-\alpha}}ds, \ \alpha>0,
\end{equation*}
provided the integral exists, where $\Gamma(.)$ is the gamma function, which is defined by $\Gamma(x)=\int_{0}^{\infty}t^{x-1}e^{-t}dt$.
\end{definition}

\begin{definition}
For at least n-times continuously differentiable function $f:[0,\infty)\rightarrow \mathbb{R}$, the Caputo derivative of fractional order $\alpha$ is defined as
\begin{equation*}
^{c}D^{\alpha}f(t)=\frac{1}{\Gamma(n-\alpha)}\int_{0}^{t}\frac{f^{(n)}(s)}{(t-s)^{\alpha+1-n}}ds,\ n-1<\alpha<n,\ n=[\alpha]+1,
\end{equation*}
where $[\alpha]$ denotes the integer part of the real number $\alpha$.
\end{definition}

\begin{lemma}[\cite{Kilbas2006}] \label{lem 2.1}
For $\alpha> 0$, the general solution of the fractional differential equation $^{c}D^{\alpha}x(t)=0$ is
given by
\begin{equation*}
x(t)=c_{0}+c_{1}t+...+c_{n-1}t^{n-1},
\end{equation*}
where $c_{i}\in \mathbb{R}$, $i = 0, 1,...,n-1 \ (n = [\alpha] + 1)$.
\end{lemma}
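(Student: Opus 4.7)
The plan is to exploit the fact that the Caputo derivative can be written as a Riemann--Liouville integral of an ordinary derivative, namely
$$^{c}D^{\alpha}x(t)=I^{n-\alpha}x^{(n)}(t),$$
with $n=[\alpha]+1$, which follows directly from the definitions given above. First I would dispose of the easy direction: any polynomial $x(t)=c_{0}+c_{1}t+\cdots+c_{n-1}t^{n-1}$ satisfies $x^{(n)}\equiv 0$, and hence $^{c}D^{\alpha}x(t)=I^{n-\alpha}(0)=0$. So every such polynomial solves the equation, and it only remains to show that these are the only solutions.

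For the harder direction I would start from $^{c}D^{\alpha}x=0$, i.e.\ $I^{n-\alpha}x^{(n)}=0$, and apply the Riemann--Liouville operator $I^{\alpha}$ to both sides. Using the semigroup property $I^{\alpha}I^{n-\alpha}=I^{n}$, the equation collapses to $I^{n}x^{(n)}=0$. But the $n$-fold integral $I^{n}x^{(n)}(t)$ is precisely the integral remainder in Taylor's formula,
$$I^{n}x^{(n)}(t)=\frac{1}{(n-1)!}\int_{0}^{t}(t-s)^{n-1}x^{(n)}(s)\,ds=x(t)-\sum_{k=0}^{n-1}\frac{x^{(k)}(0)}{k!}t^{k}.$$
Setting this equal to zero forces
$$x(t)=\sum_{k=0}^{n-1}\frac{x^{(k)}(0)}{k!}t^{k},$$
so $x$ is a polynomial of degree at most $n-1$; identifying $c_{k}=x^{(k)}(0)/k!$ yields the claimed representation.

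The main obstacle is justifying the semigroup law $I^{\alpha}I^{\beta}=I^{\alpha+\beta}$ (equivalently, showing that $I^{n-\alpha}$ is injective on the space of continuous $n$-th derivatives), since this is where the analytic content resides. The standard route is to swap the order of integration in the double integral defining $I^{\alpha}I^{\beta}x(t)$ by Fubini's theorem and apply the Beta identity
$$\int_{s}^{t}(t-\tau)^{\alpha-1}(\tau-s)^{\beta-1}\,d\tau=B(\alpha,\beta)(t-s)^{\alpha+\beta-1},$$
which after dividing by $\Gamma(\alpha)\Gamma(\beta)$ and using $B(\alpha,\beta)=\Gamma(\alpha)\Gamma(\beta)/\Gamma(\alpha+\beta)$ produces exactly the kernel of $I^{\alpha+\beta}$. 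Once this identity is secured, the rest of the argument is mechanical rewriting of integrals and reading off the coefficients $c_{k}$.
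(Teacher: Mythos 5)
The paper does not prove this lemma at all: it is quoted verbatim from the reference \cite{Kilbas2006} and used as a known fact, so there is no in-paper argument to compare against. Your proof is correct and is essentially the standard one found in that reference. The identity $^{c}D^{\alpha}x=I^{n-\alpha}x^{(n)}$ is immediate from the paper's Definition 2.5, the easy inclusion of polynomials of degree at most $n-1$ is trivial, and the converse via applying $I^{\alpha}$, invoking the semigroup law $I^{\alpha}I^{n-\alpha}=I^{n}$ (justified by Fubini and the Beta integral), and recognizing $I^{n}x^{(n)}$ as the Taylor remainder is exactly right. One cosmetic remark: your parenthetical claim that the semigroup law is ``equivalent'' to injectivity of $I^{n-\alpha}$ is not needed and not quite accurate as stated --- your argument never uses injectivity, only that $I^{\alpha}$ applied to the zero function is zero --- so you could simply delete it. Note also that both the lemma's hypothesis ``$\alpha>0$'' and your proof implicitly assume $\alpha$ is not an integer (the paper's definition of the Caputo derivative is only stated for $n-1<\alpha<n$), which is the case actually used in the paper except at the endpoint $\alpha=3$; this is an imprecision inherited from the source, not a flaw you introduced.
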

According to Lemma \ref{lem 2.1}, it follows that
\begin{equation*}
{I^{\alpha}}\ {^{c}D^{\alpha}}x(t)=x(t)+c_{0}+c_{1}t+...+c_{n-1}t^{n-1},
\end{equation*}
for some $c_{i}\in \mathbb{R}$, $i = 0, 1,...,n-1 \ (n = [\alpha] + 1)$.

\begin{lemma}[\cite{Podlubny1999}, \cite{Kilbas2006}]\label{lem 2.2} If $\beta> \alpha>0$ and $x \in L_{1}[0,1]$, then
\item[(i)] ${^{c}D^{\alpha}}\ {I^{\beta}}x(t)={I^{\beta-\alpha}}x(t)$, holds almost everywhere on $[0,1]$ and it is valid at any point $t\in [0,1]$ if $x\in C[0,1]$;\  ${^{c}D^{\alpha}}\ {I^{\alpha}}x(t)=x(t) $, for all \ $t\in[0,1]$.
\item[(ii)] ${^{c}D^{\alpha}}t^{\lambda-1}=\frac{\Gamma(\lambda)}{\Gamma(\lambda-\alpha)}t^{\lambda-\alpha-1}$,
$\lambda>[\alpha]$ \ and \ ${^{c}D^{\alpha}}t^{\lambda-1}=0$, \  $\lambda<[\alpha]$.
\end{lemma}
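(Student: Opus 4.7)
My plan is to derive both parts from the definitions of the Riemann--Liouville integral and the Caputo derivative, using as the main tool the semigroup property $I^{a}I^{b}x = I^{a+b}x$ for $a,b>0$ and $x \in L^{1}[0,1]$. This semigroup identity follows from Fubini's theorem and the beta--gamma relation $\int_{0}^{1}(1-u)^{a-1}u^{b-1}\,du = \Gamma(a)\Gamma(b)/\Gamma(a+b)$, which I would recall at the outset.

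For (i), I first reduce the general statement ${}^{c}D^{\alpha}I^{\beta}x = I^{\beta-\alpha}x$ to the special case $\beta = \alpha$: by the semigroup, $I^{\beta}x = I^{\alpha}(I^{\beta-\alpha}x)$, so setting $y = I^{\beta-\alpha}x$ reduces the claim to ${}^{c}D^{\alpha}I^{\alpha}y = y$. With $n = [\alpha]+1$ and ${}^{c}D^{\alpha}f = I^{n-\alpha}f^{(n)}$, the crucial observation is that $(I^{\alpha}y)^{(k)}(t) = I^{\alpha-k}y(t)$ for $0 \le k \le n-1$ (each differentiation is justified under the integral since $\alpha-k>0$ keeps the kernel integrable), and these all vanish at $t=0$. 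Using this vanishing to identify the Caputo derivative with the Riemann--Liouville derivative on $I^{\alpha}y$, and then invoking $I^{n-\alpha}I^{\alpha}y = I^{n}y$ via the semigroup, the $n$-th derivative recovers $y$. For $x \in L^{1}[0,1]$ the identity then holds a.e.\ by a density argument, while for $x \in C[0,1]$ it holds pointwise.

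For (ii), the formula is a direct computation. When $\lambda - 1$ exceeds $n-1$, $s^{\lambda-1}$ is $n$-times differentiable with $\frac{d^{n}}{ds^{n}}s^{\lambda-1} = \frac{\Gamma(\lambda)}{\Gamma(\lambda-n)}s^{\lambda-n-1}$, so the Caputo definition gives
\begin{equation*}
{}^{c}D^{\alpha}t^{\lambda-1} = \frac{\Gamma(\lambda)}{\Gamma(n-\alpha)\Gamma(\lambda-n)}\int_{0}^{t}(t-s)^{n-\alpha-1}s^{\lambda-n-1}\,ds.
\end{equation*}
The substitution $s = tu$ converts the integral into $t^{\lambda-\alpha-1}B(n-\alpha,\lambda-n)$, and one further application of the beta--gamma identity produces the announced coefficient $\Gamma(\lambda)/\Gamma(\lambda-\alpha)$. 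In the remaining case, $\lambda-1$ is a non-negative integer strictly less than $n$, and then $t^{\lambda-1}$ is a polynomial whose $n$-th derivative vanishes identically, so ${}^{c}D^{\alpha}t^{\lambda-1} = I^{n-\alpha}(0) = 0$ immediately.

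The main obstacle I anticipate is the bookkeeping in part (i), specifically verifying that the vanishing of $(I^{\alpha}y)^{(k)}(0)$ for $k<n$ really permits one to exchange the Caputo derivative for the Riemann--Liouville derivative on $I^{\alpha}y$; once that bridge is in place, the semigroup property does all the remaining work. Everything else is standard Gamma-function arithmetic, which is why the cited references \cite{Podlubny1999,Kilbas2006} are adequate.
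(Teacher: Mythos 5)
The paper offers no proof of this lemma: it is quoted as a known result from \cite{Podlubny1999} and \cite{Kilbas2006}, so there is no in-text argument to compare yours against. Your outline is the standard textbook proof and is essentially sound: reducing (i) to the left-inverse identity ${}^{c}D^{\alpha}I^{\alpha}y=y$ via the semigroup property $I^{a}I^{b}=I^{a+b}$, checking that $(I^{\alpha}y)^{(k)}(0)=0$ for $0\le k\le n-1$ (which is what lets you identify the Caputo with the Riemann--Liouville derivative on $I^{\alpha}y$ and then commute $I^{n-\alpha}$ with $d^{n}/dt^{n}$), and the beta--gamma computation in (ii) are exactly the steps carried out in the cited references. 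One caveat is worth recording explicitly: your computation in (ii) requires $\lambda-1>n-1$, i.e.\ $\lambda>[\alpha]+1$, in order for $t^{\lambda-1}$ to be $n$ times differentiable with an $n$-th derivative that is integrable near $0$, whereas the lemma as printed assumes only $\lambda>[\alpha]$; similarly, the vanishing case genuinely needs $\lambda-1$ to be a nonnegative integer at most $n-1$, which is not the same as $\lambda<[\alpha]$ (for non-integer $\lambda$ the $n$-th derivative of $t^{\lambda-1}$ does not vanish). These are imprecisions in the statement as transcribed here rather than gaps in your reasoning --- the version in \cite{Kilbas2006} carries the correct ranges, and your proof quietly works under those correct hypotheses --- but you should flag the discrepancy rather than silently strengthening the assumption on $\lambda$.
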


\begin{definition}
Let $p > 0$, $q > 0$, the Euler beta function is defined by
\[\mathfrak{B}(p,q)=\int_{0}^{1}t^{p-1}(1-t)^{q-1}dt.\]
\end{definition}

The basic properties of the $\Gamma$ and $\mathfrak{B}$ functions which will be used in the following studies are listed below.
\begin{proposition}
Let $\alpha>0$, $p>0$, $q>0$ and $n$ a positive integer. Then
\[\Gamma(\alpha+1)=\alpha\Gamma(\alpha),\ \Gamma(n+\frac{1}{2})=\frac{\sqrt{\pi}(2n)!}{2^{2n}n!}, \Gamma(n+1)=n!,\]
\[\mathfrak{B}(p,q)=\frac{\Gamma(p)\Gamma(q)}{\Gamma(p+q)},\ \mathfrak{B}(p,q)=\mathfrak{B}(p,q+1)+\mathfrak{B}(p+1,q),\]
\[\mathfrak{B}(p,q)=\mathfrak{B}(q,p),\ \mathfrak{B}(p+1,q)=\mathfrak{B}(p,q)\frac{p}{p+q}.\]
In particular,
\[\Gamma(1)=1,\ \Gamma\bigg(\frac{1}{2}\bigg)=\sqrt{\pi},\ \mathfrak{B}(1,q)=\frac{1}{q}.\]
\end{proposition}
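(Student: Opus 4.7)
The plan is to handle the Gamma identities first and then convert them into all the Beta identities via the product relation $\mathfrak{B}(p,q)=\Gamma(p)\Gamma(q)/\Gamma(p+q)$, which is really the only nontrivial ingredient.

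For the functional equation $\Gamma(\alpha+1)=\alpha\Gamma(\alpha)$, I would apply integration by parts to $\Gamma(\alpha+1)=\int_0^\infty t^{\alpha}e^{-t}\,dt$ with $u=t^{\alpha}$ and $dv=e^{-t}dt$; the boundary term vanishes at both endpoints for $\alpha>0$. The identity $\Gamma(1)=1$ is a direct evaluation, and $\Gamma(n+1)=n!$ then follows by an immediate induction on $n$. For $\Gamma(1/2)=\sqrt{\pi}$, I would substitute $t=s^2$ in the integral defining $\Gamma(1/2)$ to obtain $2\int_0^\infty e^{-s^2}ds$, which equals $\sqrt{\pi}$ by the standard polar-coordinate evaluation of the Gaussian. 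The half-integer formula $\Gamma(n+\tfrac{1}{2})=\sqrt{\pi}(2n)!/(2^{2n}n!)$ then comes from iterating the recursion $\Gamma(n+\tfrac{1}{2})=(n-\tfrac{1}{2})(n-\tfrac{3}{2})\cdots\tfrac{1}{2}\cdot\Gamma(\tfrac{1}{2})$; I would rewrite the product as $(2n-1)(2n-3)\cdots 1/2^n$ and multiply numerator and denominator by $(2n)(2n-2)\cdots 2=2^n n!$ to land on the required closed form.

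The cornerstone identity $\mathfrak{B}(p,q)=\Gamma(p)\Gamma(q)/\Gamma(p+q)$ is the main obstacle; I would prove it by writing
\[\Gamma(p)\Gamma(q)=\int_0^\infty\!\!\int_0^\infty u^{p-1}v^{q-1}e^{-(u+v)}\,du\,dv,\]
changing variables to $u=rs$, $v=r(1-s)$ with Jacobian $r$, and separating the resulting integral in $r$ (which gives $\Gamma(p+q)$) from the integral in $s$ over $[0,1]$ (which gives $\mathfrak{B}(p,q)$). The symmetry $\mathfrak{B}(p,q)=\mathfrak{B}(q,p)$ is then either read off from this formula or proved directly by the substitution $t\mapsto 1-t$ in the defining integral; similarly $\mathfrak{B}(1,q)=\int_0^1(1-t)^{q-1}dt=1/q$ is immediate.

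For the two recursions, I would use the Gamma representation as a shortcut. The identity
\[\mathfrak{B}(p+1,q)=\frac{\Gamma(p+1)\Gamma(q)}{\Gamma(p+q+1)}=\frac{p\,\Gamma(p)\Gamma(q)}{(p+q)\Gamma(p+q)}=\frac{p}{p+q}\mathfrak{B}(p,q)\]
follows at once from the Gamma functional equation applied to both numerator and denominator. By symmetry this also yields $\mathfrak{B}(p,q+1)=\tfrac{q}{p+q}\mathfrak{B}(p,q)$, and adding the two gives $\mathfrak{B}(p+1,q)+\mathfrak{B}(p,q+1)=\mathfrak{B}(p,q)$. Alternatively, one can derive these two recursions simultaneously from the algebraic identity $1=t+(1-t)$ inside $\int_0^1 t^{p-1}(1-t)^{q-1}dt$, which is a cleaner self-contained route if one wishes to avoid the Gamma detour. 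Either way, all stated identities follow, and the only genuinely analytical step is the double-integral change of variables establishing the Beta–Gamma relation.
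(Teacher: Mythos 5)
Your proof is correct and complete: every identity is established by the standard arguments (integration by parts for the functional equation, the Gaussian integral for $\Gamma(\tfrac12)$, the polar-type change of variables $u=rs$, $v=r(1-s)$ for the Beta--Gamma relation, and the recursions read off from it or from $1=t+(1-t)$). The paper itself states these properties without proof, as classical facts, so there is no authorial argument to compare against; your write-up simply supplies the textbook derivation, and it is sound.
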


To prove our results, we need the following three well-known fixed point theorems.

 \begin{theorem}\label{T1} \cite{Kras}
 Let $E$ be a Banach space, and let $K\subset E$, be a cone. Assume that $%
 \Omega_{1}$ and $\Omega_{2}$ are bounded open subsets of $E$ with $0\in \Omega _{1}$,
 $\overline{{\Omega }}_{1}\subset \Omega_{2}$ and let
 \[
 A:K\cap  ( \overline{{\Omega}}_{2} \backslash \Omega_{1} )\rightarrow K
 \]
 be a completely continuous operator such that
 \begin{itemize}
 \item[(a)]
 $\left\Vert Au\right\Vert \leq \left\Vert u\right\Vert ,$ $u\in K\cap
 \partial
 \Omega _{1}$, and $\left\Vert Au\right\Vert \geq \left\Vert u\right\Vert ,$
 $u\in K\cap \partial \Omega_{2}$; or
 \item[(b)]
 $\left\Vert Au\right\Vert \geq \left\Vert u\right\Vert ,$ $u\in K\cap
 \partial
 \Omega_{1}$, and  $\left\Vert Au\right\Vert \leq \left\Vert u\right\Vert ,$
 $u\in K\cap \partial \Omega_{2}.$
 \end{itemize}
 Then $A$ has a fixed point in $K\cap  ( \overline{{\Omega }}_{2} \backslash \Omega_{1} )$.
 \end{theorem}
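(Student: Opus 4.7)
The plan is to deduce Krasnosel'skii's cone compression-expansion theorem from the fixed-point index theory for completely continuous self-maps of a cone. First, I would dispose of the degenerate case: if $A$ has a fixed point on $K\cap\partial\Omega_1$ or on $K\cap\partial\Omega_2$, the conclusion is immediate, so I may assume $A$ is fixed-point free on both boundaries. Under this assumption the indices $i(A,K\cap\Omega_1,K)$ and $i(A,K\cap\Omega_2,K)$ are well defined; they are constructed via a Dugundji-type extension of $A$ to the ambient Banach space followed by the Leray--Schauder degree, as carried out in standard references such as Deimling or Guo--Lakshmikantham.

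The core of the proof rests on two index computations, which I would isolate as lemmas. \textbf{Contraction lemma:} if $Au\ne\mu u$ for every $u\in K\cap\partial\Omega$ and every $\mu\ge 1$, then $i(A,K\cap\Omega,K)=1$; the proof uses the straight-line homotopy $H(t,u)=tAu$ on $[0,1]$, whose assumed fixed-point-free property on the boundary and normalization at $t=0$ yield the index value $1$. \textbf{Expansion lemma:} if there exists $e\in K\setminus\{0\}$ with $u-Au\ne\lambda e$ for all $u\in K\cap\partial\Omega$ and $\lambda\ge 0$, then $i(A,K\cap\Omega,K)=0$; here one homotopes $A$ to $A+\lambda_0 e$ with $\lambda_0$ so large that, using complete continuity of $A$ and boundedness of $\Omega$, the perturbed map has no fixed points in $K\cap\overline{\Omega}$ at all.

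Next I would translate the norm inequalities of (a) and (b) into the boundary conditions required by these lemmas. Under hypothesis (a), on $K\cap\partial\Omega_1$ the relation $Au=\mu u$ with $\mu>1$ would force $\|Au\|=\mu\|u\|>\|u\|$, contradicting $\|Au\|\le\|u\|$; the case $\mu=1$ is ruled out by the fixed-point-free assumption. Hence $i(A,K\cap\Omega_1,K)=1$. On $K\cap\partial\Omega_2$, choosing $e\in K\setminus\{0\}$, any solution $u-Au=\lambda e$ with $\lambda\ge 0$ and $u\in K\cap\partial\Omega_2$ is excluded: for $\lambda$ large the boundedness of $\Omega_2$ and compactness of $A(K\cap\overline\Omega_2)$ yield a contradiction, while for smaller $\lambda$ a homotopy in $\lambda$ combined with $\|Au\|\ge\|u\|$ and the cone structure prevents boundary fixed points; this delivers $i(A,K\cap\Omega_2,K)=0$. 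Case (b) is entirely symmetric, just with the roles of $\Omega_1$ and $\Omega_2$ exchanged.

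Finally, the additivity (excision) property of the index produces
\[ i\bigl(A,K\cap(\Omega_2\setminus\overline{\Omega}_1),K\bigr) = i(A,K\cap\Omega_2,K) - i(A,K\cap\Omega_1,K), \]
equal to $-1$ in case (a) and to $+1$ in case (b); in either situation it is nonzero, so the solution property of the index delivers a fixed point of $A$ in $K\cap(\overline{\Omega}_2\setminus\Omega_1)$. The main obstacle is not the logical flow, which is short, but the behind-the-scenes construction of the fixed-point index on a cone together with the verification of its standard properties---homotopy invariance, additivity, normalization, and the solution property---which requires the full Leray--Schauder degree machinery combined with Dugundji's extension theorem.
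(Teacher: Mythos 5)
First, a point of reference: the paper does not prove this statement at all---it is quoted verbatim from Krasnosel'skii's 1964 monograph \cite{Kras} as a ready-made tool, so there is no in-paper proof to compare yours against. Judged on its own, your overall strategy (fixed-point index on the cone; index $1$ on $K\cap\Omega_{1}$, index $0$ on $K\cap\Omega_{2}$, then additivity/excision to get a nonzero index on the annular region) is the standard modern route, found for instance in Guo--Lakshmikantham. Your index-$1$ computation is complete and correct: $Au=\mu u$ with $\mu>1$ on $K\cap\partial\Omega_{1}$ would force $\Vert Au\Vert>\Vert u\Vert$ (note $\Vert u\Vert>0$ because $0\in\Omega_{1}$), so the homotopy $tAu$ is admissible and normalization gives index $1$. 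The additivity step and the reduction to the case where $A$ is fixed-point free on both boundaries are also fine.

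The genuine gap is in the index-$0$ computation on $K\cap\partial\Omega_{2}$. You propose to verify the hypothesis of the $e$-translation lemma, namely $u-Au\neq\lambda e$ for all $\lambda\geq 0$, from the norm inequality $\Vert Au\Vert\geq\Vert u\Vert$. But $u=Au+\lambda e$ with $\lambda>0$ only says $u-Au\in K\setminus\{0\}$, and in a general Banach space the norm is not monotone with respect to the cone order (the cone is not assumed normal), so this is perfectly compatible with $\Vert Au\Vert\geq\Vert u\Vert$; no contradiction is available. Your fallback sentence (``for smaller $\lambda$ a homotopy in $\lambda$ combined with $\Vert Au\Vert\ge\Vert u\Vert$ and the cone structure prevents boundary fixed points'') is an assertion, not an argument, and it is exactly the point that fails. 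The standard repair is to use a different index-$0$ lemma: if $Au\neq\mu u$ for all $u\in K\cap\partial\Omega_{2}$ and $\mu\in(0,1]$, and $\inf_{u\in K\cap\partial\Omega_{2}}\Vert Au\Vert>0$, then $i(A,K\cap\Omega_{2},K)=0$. Both hypotheses \emph{do} follow immediately from your data: $Au=\mu u$ with $0<\mu<1$ would give $\Vert Au\Vert=\mu\Vert u\Vert<\Vert u\Vert$, $\mu=1$ is the excluded fixed point, and $\Vert Au\Vert\geq\Vert u\Vert\geq\mathrm{dist}(0,\partial\Omega_{2})>0$ since $0\in\Omega_{1}\subset\Omega_{2}$. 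You should either quote that lemma from Guo--Lakshmikantham or prove it; as written, the $e$-translation route does not close.
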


\begin{theorem}\label{T2} \cite{Granas2003}
Let $E$ be a Banach space, $E_{1}$ a closed, convex subset of $E$, $U$ an open subset of $E_{1}$                      and $0\in U$. Suppose that $A:\bar{U}\rightarrow E_{1}$ is a continuous, compact (that is $A(\bar{U})$ is a relatively
compact subset of $E_{1}$) map. Then either
\begin{itemize}
\item[(i)] $A$ has a fixed point in $\bar{U}$, or
\item[(ii)] There is a $u\in \partial {U}$ (the boundary of $U$ in $E_{1}$) and $\lambda \in (0,1)$ with $u=\lambda A(u).$
\end{itemize} \end{theorem}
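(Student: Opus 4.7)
My plan is to argue by contradiction on the dichotomy: I would assume (i) fails, so $A$ has no fixed point in $\bar{U}$, and deduce that (ii) must hold. The central idea, due to Schauder and Leray, is to replace $A$ by a continuous, compact map defined on all of $E_{1}$ that is damped to $0$ near $\partial U$, and then to apply Schauder's fixed point theorem to that extension.

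First I would introduce the set
\[
B = \bigl\{ u \in \bar{U} \,:\, u = \lambda A(u) \text{ for some } \lambda \in [0,1]\bigr\}.
\]
Clearly $0 \in B$, and $B$ is closed in $\bar{U}$: given $u_n = \lambda_n A(u_n)$ in $B$ with $u_n \to u$, the relative compactness of $A(\bar{U})$ lets me pass to a subsequence with $\lambda_n \to \lambda \in [0,1]$ and $A(u_n) \to A(u)$, whence $u = \lambda A(u)$. Since $A$ has no fixed point in $\bar{U}$, no element of $B$ arises from $\lambda = 1$; combined with the contradiction assumption that (ii) also fails, this forces $B \cap \partial U = \emptyset$.

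Because $B$ and $\partial U$ are disjoint closed subsets of the metric space $E_{1}$, Urysohn's lemma produces a continuous $\mu : E_{1} \to [0,1]$ with $\mu \equiv 1$ on $B$ and $\mu \equiv 0$ on $E_{1}\setminus U$. I would then set
\[
\tilde{A}(u) = \begin{cases} \mu(u)\, A(u), & u \in \bar{U}, \\ 0, & u \in E_{1}\setminus \bar{U}, \end{cases}
\]
note that the two branches agree on $\partial U$ where $\mu = 0$, so $\tilde{A} : E_{1} \to E_{1}$ is continuous, and observe that $\tilde{A}(E_{1})$ lies in the closed convex hull $K := \overline{\mathrm{co}}\bigl(A(\bar{U}) \cup \{0\}\bigr)$, which is convex, compact (by Mazur's theorem, using the relative compactness of $A(\bar{U})$), and contained in $E_{1}$. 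Since $\tilde{A}$ carries $K$ into itself, Schauder's fixed point theorem applied to $\tilde{A}|_{K} : K \to K$ yields some $v \in K$ with $v = \tilde{A}(v)$.

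To finish, if $v \notin \bar{U}$ then $v = \tilde{A}(v) = 0 \in U \subset \bar{U}$, a contradiction; hence $v \in \bar{U}$ and $v = \mu(v) A(v)$, placing $v$ in $B$, so $\mu(v) = 1$ and therefore $v = A(v)$, contradicting the standing assumption. The main obstacle is balancing the design of $\tilde{A}$: one must simultaneously ensure continuity on all of $E_{1}$, keep the image inside a compact convex subset of $E_{1}$ (so Schauder applies), and calibrate the Urysohn cutoff $\mu$ so that the Schauder fixed point is forced into the region $B$ where $\mu = 1$, where it coincides with a genuine fixed point of the original $A$.
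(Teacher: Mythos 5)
The paper does not prove this statement: it is quoted verbatim as a known result from the cited reference \cite{Granas2003} (the Leray--Schauder nonlinear alternative), so there is no in-paper proof to compare against. Your argument is the standard proof of that result and is correct: the set $B$ of solutions of $u=\lambda A(u)$ is closed, misses $\partial U$ under the contradiction hypothesis (the only unmentioned case, $\lambda=0$, gives $u=0\in U$, so it cannot lie on $\partial U$), the Urysohn cutoff produces a continuous self-map of the compact convex set $K=\overline{\mathrm{co}}\bigl(A(\bar U)\cup\{0\}\bigr)\subset E_{1}$, and Schauder's theorem forces a fixed point of $\tilde A$ that must lie in $B$, hence satisfy $\mu(v)=1$ and be a genuine fixed point of $A$. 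This matches the argument in the cited source essentially step for step.
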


\begin{theorem}\label{T3} (Banach's fixed point theorem)
 Let $(X,d)$ be a non-empty complete metric space with a contraction mapping $T:X\rightarrow X$. Then $T$ admits a unique fixed-point $u^{*}$ in $X$ (i.e. $T(u^{*}) = u^{*}$).
\end{theorem}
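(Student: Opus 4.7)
The plan is to produce the classical Picard iteration argument. Let $k\in[0,1)$ be the contraction constant, so $d(T(x),T(y))\le k\,d(x,y)$ for all $x,y\in X$. First, I would pick any starting point $x_{0}\in X$ and define the orbit $x_{n+1}=T(x_{n})$ for $n\ge 0$. An immediate induction on the contraction inequality gives $d(x_{n+1},x_{n})\le k^{n}\,d(x_{1},x_{0})$, which is the workhorse estimate.

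Next, I would show the sequence $\{x_{n}\}$ is Cauchy. For $m>n$, the triangle inequality together with the geometric bound above yields
\[
d(x_{m},x_{n})\le \sum_{j=n}^{m-1} d(x_{j+1},x_{j}) \le d(x_{1},x_{0})\sum_{j=n}^{m-1} k^{j} \le \frac{k^{n}}{1-k}\,d(x_{1},x_{0}),
\]
and the right side tends to $0$ as $n\to\infty$. By completeness of $(X,d)$, the sequence converges to some limit $u^{*}\in X$. Because any contraction is automatically Lipschitz, hence continuous, passing to the limit in $x_{n+1}=T(x_{n})$ gives $u^{*}=T(u^{*})$, establishing existence of a fixed point.

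For uniqueness, I would suppose $u^{*}$ and $v^{*}$ are both fixed points. Then $d(u^{*},v^{*})=d(T(u^{*}),T(v^{*}))\le k\,d(u^{*},v^{*})$, and since $k<1$ this forces $d(u^{*},v^{*})=0$, i.e.\ $u^{*}=v^{*}$.

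I do not anticipate a genuine obstacle here since this is a standard argument; the only delicate point is being careful to use completeness of $X$ (not merely of some ambient space) to pass from the Cauchy property to an actual limit in $X$, and to invoke continuity of $T$ (a direct consequence of the contraction inequality) when taking the limit in the recursion.
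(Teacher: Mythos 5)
Your proof is correct and is the standard Picard iteration argument; the paper states this theorem as a classical tool and gives no proof of its own, so there is nothing to compare against. The Cauchy estimate, the use of completeness, the continuity of $T$ from the contraction inequality, and the uniqueness step are all handled properly.
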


 Consider the fractional boundary value problem
 \begin{equation}\label{eq3}
 \Big(\varphi_{p}\big(D^{\alpha}u(t)\big)\Big)^{\prime} +h(t)= 0,\  t \in (0,1),
 \end{equation}
 \begin{equation}\label{eq4}
  D^{\alpha}u(0)= u^{\prime} (0) =u^{\prime \prime}(0) =0,\ u(1)+ u^{\prime }(1)=u^{\prime}(\eta),
 \end{equation}
where $\alpha\in(2,3]$ and $h\in C[0,1]$.

\begin{lemma}\label{lem2.3}
Suppose that  $h\in C([0,1],[0,\infty))$, then the boundary value problem \eqref{eq3}-\eqref{eq4} has a unique solution which can be expressed by
$$ u(t) = \int_{0}^{1}  \mathcal{K}(t,s)\varphi_{q}\bigg(\int_{0}^{s}h(\tau)d\tau\bigg) ds ,$$
where
 \begin{equation}\label{eq5}
\mathcal{K}(t,s)=\mathcal{G}(t,s)+\mathcal{H}(\eta,s),
 \end{equation}

\begin{eqnarray}\label{eq6}
\mathcal{G}(t,s)& =& \begin{cases}\frac{(1-s)^{\alpha-1}-(t-s)^{\alpha-1}}{\Gamma(\alpha)}, & 0 \leq s \leq t \leq 1, \\
\frac{(1-s)^{\alpha-1}}{\Gamma(\alpha)} , & 0 \leq t \leq s \leq 1,
\end{cases}\notag\\
\mathcal{H}(t,s)& =& \begin{cases}\frac{(1-s)^{\alpha-2}-(t-s)^{\alpha-2}}{\Gamma(\alpha-1)}, & 0 \leq s \leq t \leq 1, \\
\frac{(1-s)^{\alpha-2}}{\Gamma(\alpha-1)} , & 0 \leq t \leq s \leq 1 .
\end{cases}
\end{eqnarray}
\end{lemma}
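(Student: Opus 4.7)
The plan is to reduce the boundary value problem to an equivalent integral equation in three stages: (i) kill the outer $\varphi_p$ by a single integration in $t$, (ii) invert the Caputo operator $D^\alpha$ using Lemma 2.1, (iii) determine the three constants of integration from the boundary conditions and repackage the result using the piecewise kernels $\mathcal{G}$ and $\mathcal{H}$.

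First I would integrate \eqref{eq3} from $0$ to $t$ and use $D^{\alpha}u(0)=0$ to obtain $\varphi_p(D^\alpha u(t)) = -\int_0^t h(\tau)\,d\tau$. Applying the inverse operator $\varphi_q=\varphi_p^{-1}$, which is odd, yields
\[
D^\alpha u(t) \;=\; -\varphi_q\!\left(\int_0^t h(\tau)\,d\tau\right).
\]
Then I would apply $I^\alpha$ to both sides. Since $2<\alpha\le 3$ gives $n=[\alpha]+1=3$, Lemma \ref{lem 2.1} (in the form stated in the remark following it) produces constants $c_0,c_1,c_2\in\mathbb{R}$ with
\[
u(t) \;=\; -\frac{1}{\Gamma(\alpha)}\int_0^t (t-s)^{\alpha-1}\varphi_q\!\left(\int_0^s h(\tau)\,d\tau\right)ds \;+\; c_0 + c_1 t + c_2 t^2 .
\]

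Next I would differentiate once and twice under the integral sign (valid because $\alpha>2$), noting that the boundary term vanishes. Evaluating at $t=0$, the conditions $u'(0)=u''(0)=0$ immediately force $c_1=c_2=0$. Substituting into the three-point condition $u(1)+u'(1)=u'(\eta)$ gives a single linear equation for $c_0$, whose solution is
\[
c_0 \;=\; \frac{1}{\Gamma(\alpha)}\!\int_0^1 (1-s)^{\alpha-1}\Phi(s)\,ds \;+\; \frac{1}{\Gamma(\alpha-1)}\!\int_0^1 (1-s)^{\alpha-2}\Phi(s)\,ds \;-\; \frac{1}{\Gamma(\alpha-1)}\!\int_0^\eta (\eta-s)^{\alpha-2}\Phi(s)\,ds,
\]
where $\Phi(s) := \varphi_q\!\left(\int_0^s h(\tau)\,d\tau\right)$. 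The constants are uniquely determined, which establishes uniqueness of the solution.

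Finally I would regroup the four resulting integrals into two. The first and the $\Gamma(\alpha)$-term of $c_0$, after splitting the range $[0,1]$ at $s=t$, collapse into $\int_0^1 \mathcal{G}(t,s)\Phi(s)\,ds$; the remaining pair, after splitting $[0,1]$ at $s=\eta$, collapses into $\int_0^1 \mathcal{H}(\eta,s)\Phi(s)\,ds$. Summing gives the claimed formula $u(t)=\int_0^1\mathcal{K}(t,s)\Phi(s)\,ds$ with $\mathcal{K}=\mathcal{G}+\mathcal{H}(\eta,\cdot)$. The main obstacle is exactly this last bookkeeping step: one has to be careful with the case split defining $\mathcal{G}$ and $\mathcal{H}$ so that the full-interval integrals combined with the truncated ones ($\int_0^t$ and $\int_0^\eta$) recombine into the correct piecewise kernels rather than producing spurious boundary terms.
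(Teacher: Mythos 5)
Your proposal is correct and follows essentially the same route as the paper's own proof: integrate once to remove $\varphi_{p}$, apply $I^{\alpha}$ via Lemma \ref{lem 2.1} to get $u(t)=-I^{\alpha}\varphi_{q}\big(\int_{0}^{t}h\big)+C_{0}+C_{1}t+C_{2}t^{2}$, use $u'(0)=u''(0)=0$ to kill $C_{1},C_{2}$, solve the three-point condition for $C_{0}$, and split the full-interval integrals at $s=t$ and $s=\eta$ to assemble $\mathcal{G}$ and $\mathcal{H}(\eta,\cdot)$. The only cosmetic caveat is that your shorthand $\Phi(s)$ for $\varphi_{q}\big(\int_{0}^{s}h(\tau)\,d\tau\big)$ clashes with the symbol $\Phi$ the paper later reserves for $\frac{(\alpha-s)(1-s)^{\alpha-2}}{\Gamma(\alpha)}$, so you should rename it.
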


\begin{proof}
By integrating the equation \eqref{eq3}, it follows that
$$ \varphi_{p}\big(D^{\alpha}u(t)\big)-\varphi_{p}\big(D^{\alpha}u(0)\big)= - \int_{0}^{t} h(s) ds,$$
and so,
$$ D^{\alpha}u(t)=-\varphi_{q}\bigg(\int_{0}^{t} h(s) ds \bigg). $$
From Lemma \ref{lem 2.1} , we get
\begin{equation*} \label{}
u(t) = - I^{\alpha}\varphi_{q}\bigg(\int_{0}^{t} h(s) ds\bigg)+ C_{0}+C_{1}t+ C_{2}t^{2}.
\end{equation*}
where $C_{0}, C_{1}, C_{2} \in \mathbb{R}$ are constants.
Using the boundary conditions \eqref{eq4}, we have \\
\[C_{1} =C_{2}= 0.\]
So,
\begin{eqnarray}\label{eq7}
u(t) & =& - \int_{0}^{t}\frac{(t-s)^{\alpha-1}}{\Gamma(\alpha)}\varphi_{q}\bigg(\int_{0}^{s} h(\tau) d\tau \bigg)ds+ C_{0},\\
u^{\prime}(t)& =& - \int_{0}^{t}\frac{(t-s)^{\alpha-2}}{\Gamma(\alpha-1)}\varphi_{q}\bigg(\int_{0}^{s} h(\tau) d\tau \bigg)ds \notag.
\end{eqnarray}
By the boundary condition $u(1)+ u^{\prime }(1)=u^{\prime}(\eta)$, we have
\begin{equation*}
\begin{split}
C_{0}= & \int_{0}^{1}\frac{(1-s)^{\alpha-1}}{\Gamma(\alpha)}\varphi_{q}\bigg(\int_{0}^{s} h(\tau) d\tau \bigg)ds
+\int_{0}^{1}\frac{(1-s)^{\alpha-2}}{\Gamma(\alpha-1)}\varphi_{q}\bigg(\int_{0}^{s} h(\tau) d\tau \bigg)ds\\
 &- \int_{0}^{\eta}\frac{(\eta-s)^{\alpha-2}}{\Gamma(\alpha-1)}\varphi_{q}\bigg(\int_{0}^{s} h(\tau) d\tau \bigg)ds.
\end{split}
\end{equation*}
Inserting $C_{0}$ into \eqref{eq7}, we get

\begin{eqnarray*}
u(t)& =& - \int_{0}^{t}\frac{(t-s)^{\alpha-1}}{\Gamma(\alpha)}\varphi_{q}\bigg(\int_{0}^{s} h(\tau) d\tau \bigg)ds
+\int_{0}^{1}\frac{(1-s)^{\alpha-1}}{\Gamma(\alpha)}\varphi_{q}\bigg(\int_{0}^{s} h(\tau) d\tau \bigg)ds\\
 && +  \int_{0}^{1}\frac{(1-s)^{\alpha-2}}{\Gamma(\alpha-1)}\varphi_{q}\bigg(\int_{0}^{s} h(\tau) d\tau \bigg)ds
 -\int_{0}^{\eta}\frac{(\eta-s)^{\alpha-2}}{\Gamma(\alpha-1)}\varphi_{q}\bigg(\int_{0}^{s} h(\tau) d\tau \bigg)ds\\
&=& - \int_{0}^{t}\frac{(t-s)^{\alpha-1}}{\Gamma(\alpha)}\varphi_{q}\bigg(\int_{0}^{s} h(\tau) d\tau \bigg)ds
+ \int_{0}^{t}\frac{(1-s)^{\alpha-1}}{\Gamma(\alpha)}\varphi_{q}\bigg(\int_{0}^{s} h(\tau) d\tau \bigg)ds\\
 && +\int_{t}^{1}\frac{(1-s)^{\alpha-1}}{\Gamma(\alpha)}\varphi_{q}\bigg(\int_{0}^{s} h(\tau) d\tau \bigg)ds
 +\int_{0}^{\eta}\frac{(1-s)^{\alpha-2}}{\Gamma(\alpha-1)}\varphi_{q}\bigg(\int_{0}^{s} h(\tau) d\tau \bigg)ds\\
  &&+\int_{\eta}^{1}\frac{(1-s)^{\alpha-2}}{\Gamma(\alpha-1)}\varphi_{q}\bigg(\int_{0}^{s} h(\tau) d\tau \bigg)ds
  -\int_{0}^{\eta}\frac{(\eta-s)^{\alpha-2}}{\Gamma(\alpha-1)}\varphi_{q}\bigg(\int_{0}^{s} h(\tau) d\tau \bigg)ds\\
 & =&  \int_{0}^{t}\frac{(1-s)^{\alpha-1}-(t-s)^{\alpha-1}}{\Gamma(\alpha)}\varphi_{q}\bigg(\int_{0}^{s} h(\tau) d\tau \bigg)ds\\
 &&+\int_{t}^{1}\frac{(1-s)^{\alpha-1}}{\Gamma(\alpha)}\varphi_{q}\bigg(\int_{0}^{s} h(\tau) d\tau \bigg)ds\\
  && +  \int_{0}^{\eta}\frac{(1-s)^{\alpha-2}-(\eta-s)^{\alpha-2}}{\Gamma(\alpha-1)}\varphi_{q}\bigg(\int_{0}^{s} h(\tau) d\tau \bigg)ds\\
&& +\int_{\eta}^{1}\frac{(1-s)^{\alpha-2}}{\Gamma(\alpha-1)}\varphi_{q}\bigg(\int_{0}^{s} h(\tau) d\tau \bigg)ds\\
 & =&\int_{0}^{1} \bigg( \mathcal{G}(t,s) + \mathcal{H}(\eta,s)\bigg) \varphi_{q}\bigg(\int_{0}^{s} h(\tau) d\tau \bigg)ds \\
 & =&\int_{0}^{1}  \mathcal{K}(t,s)\varphi_{q}\Big(\int_{0}^{s}h(\tau)d\tau\Big) ds.
\end{eqnarray*}
\end{proof}

\begin{lemma}\label{lem2.4} The Green's functions $\mathcal{G}(t,s)$ and $\mathcal{H}(t,s)$ defined by \eqref{eq6} are continuous on $[0,1]\times[0,1]$ and satisfy the following properties
\begin{itemize}
\item[(i)] $\mathcal{G}(t,s)\geq 0$, $\mathcal{H}(t,s) \geq 0 $, for all\ $t, s \in [0,1];$
\item[(ii)] $\big(1-t^{\alpha-1}\big)\mathcal{G}(s,s) \leq \mathcal{G}(t,s)\leq \mathcal{G}(s,s),$  \ for all \  $(t,s) \in [0,1] \times [0,1];$
\item[(iii)]   $\big(1-t^{\alpha-2}\big)\mathcal{H}(s,s) \leq \mathcal{H}(t,s)\leq \mathcal{H}(s,s),$  \ for all \  $(t,s) \in [0,1] \times [0,1].$
 \end{itemize}
 where $\mathcal{G}(s,s)=\frac{(1-s)^{\alpha-1}}{\Gamma(\alpha)}$, $\mathcal{H}(s,s)=\frac{(1-s)^{\alpha-2}}{\Gamma(\alpha-1)}.$
\end{lemma}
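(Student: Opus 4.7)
The plan is to verify continuity by observing that each branch of $\mathcal{G}$ and of $\mathcal{H}$ is polynomial in $(t,s)$ and that the two branches agree on the diagonal $t=s$ (both pieces collapse to $(1-s)^{\alpha-1}/\Gamma(\alpha)$ for $\mathcal{G}$, and to $(1-s)^{\alpha-2}/\Gamma(\alpha-1)$ for $\mathcal{H}$). For (i), on the region $\{t\leq s\}$ each Green's function is manifestly non-negative from its very definition, while on $\{s\leq t\}$ one uses the chain $0\leq t-s\leq 1-s$ together with monotonicity of $x\mapsto x^{\alpha-1}$ (and $x\mapsto x^{\alpha-2}$) on $[0,\infty)$, which is available because $\alpha>2$ makes both exponents positive.

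For the upper bound in (ii), on $\{t\leq s\}$ equality $\mathcal{G}(t,s)=\mathcal{G}(s,s)$ holds by definition, and on $\{s\leq t\}$ the estimate follows by dropping the non-negative term $(t-s)^{\alpha-1}/\Gamma(\alpha)$. The lower bound is the only step requiring more than a glance. On $\{t\leq s\}$ it reduces to the trivial inequality $1-t^{\alpha-1}\leq 1$, so the real work is on $\{s\leq t\}$: after cancelling $\Gamma(\alpha)$ one must show
\[
(1-t^{\alpha-1})(1-s)^{\alpha-1}\leq (1-s)^{\alpha-1}-(t-s)^{\alpha-1},
\]
equivalently $(t-s)^{\alpha-1}\leq t^{\alpha-1}(1-s)^{\alpha-1}=\bigl(t(1-s)\bigr)^{\alpha-1}$. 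The decisive elementary observation is $t-s\leq t-ts=t(1-s)$, which is equivalent to $s(1-t)\geq 0$ and therefore holds throughout $[0,1]^{2}$; raising to the positive power $\alpha-1$ preserves the inequality and completes (ii).

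Part (iii) follows from a verbatim repetition of the argument for (ii) with the exponent $\alpha-1$ replaced by $\alpha-2>0$, and this is precisely the place where the assumption $\alpha>2$ (rather than merely $\alpha>1$) is actually used, since positivity of $\alpha-2$ is required both for monotonicity of $x\mapsto x^{\alpha-2}$ and for the power-raising step. The main, quite mild, obstacle in the whole proof is therefore the lower-bound inequality on $\{s\leq t\}$, and it dissolves as soon as one recognises the two-line estimate $t-s\leq t(1-s)$.
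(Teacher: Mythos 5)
Your proof is correct and follows essentially the same route as the paper's: drop the nonnegative term for the upper bound, and for the lower bound use $(t-s)^{\alpha-1}\leq t^{\alpha-1}(1-s)^{\alpha-1}$, which is exactly the paper's estimate (there written by factoring $t-s=t(1-\tfrac{s}{t})$ and using $1-\tfrac{s}{t}\leq 1-s$, whereas your multiplicative form $t-s\leq t(1-s)$ avoids dividing by $t$ and hence the paper's separate $s=0$ sub-case). The only cosmetic slip is calling the branches ``polynomial'' in $(t,s)$, which is inaccurate for non-integer $\alpha$; they are merely continuous, which suffices.
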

\begin{proof}
$\mathcal{G}$ and $\mathcal{H}$ are continuous by definition.\\
\rm{(i)} If $0\leq s \leq t$, then
\begin{eqnarray*}
\mathcal{G}(t,s)& =&\frac{(1-s)^{\alpha-1}-(t-s)^{\alpha-1}}{\Gamma(\alpha)}\\
& \geq& \frac{(1-s)^{\alpha-1}-(1-s)^{\alpha-1}}{\Gamma(\alpha)}\\
&=&0.
\end{eqnarray*}
For $s\geq t$, we have $\mathcal{G}(t,s)=\frac{(1-s)^{\alpha-1}}{\Gamma(\alpha)}\geq 0.$\\
So,
$$\mathcal{G}(t,s)\geq 0,\ \text{for all}\ t, s \in [0,1].$$
By a similar argument, we show that $\mathcal{H}(t,s)\geq 0,\ \text{for all}\ t, s \in [0,1].$\\
\rm{(ii)} If $0<s\leq t$, then
\begin{eqnarray*}
\mathcal{G}(t,s)& =&\frac{(1-s)^{\alpha-1}-(t-s)^{\alpha-1}}{\Gamma(\alpha)}\\
& \geq& \frac{(1-s)^{\alpha-1}-t^{\alpha-1}(1-\frac{s}{t})^{\alpha-1}}{\Gamma(\alpha)}\\
& \geq& \frac{(1-s)^{\alpha-1}-t^{\alpha-1}(1-s)^{\alpha-1}}{\Gamma(\alpha)}\\
&=&\big(1-t^{\alpha-1}\big)\frac{(1-s)^{\alpha-1}}{\Gamma(\alpha)}.
\end{eqnarray*}
On the other hand, we have $\mathcal{G}(t,s)\leq \frac{(1-s)^{\alpha-1}}{\Gamma(\alpha)}.$\\
Notice that
\[\mathcal{G}(s,s)=\frac{(1-s)^{\alpha-1}}{\Gamma(\alpha)}.\]
Thus,
\[\big(1-t^{\alpha-1}\big)\mathcal{G}(s,s) \leq \mathcal{G}(t,s)\leq \mathcal{G}(s,s),\ \text{for all}\ t, s \in [0,1].\]
Now, if $s=0$, then
\begin{eqnarray*}
\mathcal{G}(t,0)& =&\frac{1-t^{\alpha-1}}{\Gamma(\alpha)}\\
&\leq& \frac{1}{\Gamma(\alpha)}\\
& =& \mathcal{G}(0,0).
\end{eqnarray*}
On the other hand,
\[\mathcal{G}(t,0)=\frac{1-t^{\alpha-1}}{\Gamma(\alpha)}\geq \big(1-t^{\alpha-1}\big)\mathcal{G}(0,0).\]
Therefore, item \rm{(ii)} holds.\\
\rm{(iii)} It follows directly from \rm{(ii)}.
\end{proof}

\begin{lemma}\label{lem2.5}
Let $\rho\in (0,1)$ be fixed. $\mathcal{K}(t,s)$ defined by \eqref{eq5} satisfies the following properties
\begin{itemize}
\item[(i)] $ \mathcal{K}(t,s) \geq 0 $, for all\ $t, s \in [0,1].$
\item[(ii)] $\big(1-\eta^{\alpha-2}\big)\big(1-t^{\alpha-1}\big)\Phi(s)\leq \mathcal{K}(t,s)\leq \Phi(s),$  \ for all \  $(t,s) \in [0,1] \times [0,1].$
\item[(iii)] $\big(1-\eta^{\alpha-2}\big)\big(1-\rho^{\alpha-1}\big)\Phi(s)\leq \mathcal{K}(t,s)\leq \Phi(s),$  \ for all \  $(t,s) \in [0,\rho] \times [0,1],$
\end{itemize}
where $\Phi(s)=\frac{(\alpha-s)(1-s)^{\alpha-2}}{\Gamma(\alpha)}.$
\end{lemma}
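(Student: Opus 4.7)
The plan is to reduce everything to Lemma \ref{lem2.4} by first identifying the auxiliary function $\Phi(s)$ in terms of the diagonal values $\mathcal{G}(s,s)$ and $\mathcal{H}(s,s)$. Using the identity $(\alpha-s)=(1-s)+(\alpha-1)$ together with $\Gamma(\alpha)=(\alpha-1)\Gamma(\alpha-1)$, I expand
\[
\Phi(s)=\frac{(\alpha-s)(1-s)^{\alpha-2}}{\Gamma(\alpha)}
=\frac{(1-s)^{\alpha-1}}{\Gamma(\alpha)}+\frac{(1-s)^{\alpha-2}}{\Gamma(\alpha-1)}
=\mathcal{G}(s,s)+\mathcal{H}(s,s).
\]
This clean decomposition is the key observation that makes the rest routine, and I would state it as the first line of the proof.

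Part (i) is then immediate: by Lemma \ref{lem2.4}(i), both $\mathcal{G}(t,s)$ and $\mathcal{H}(\eta,s)$ are nonnegative, so $\mathcal{K}(t,s)=\mathcal{G}(t,s)+\mathcal{H}(\eta,s)\geq 0$. For the upper bound in (ii), I would apply the right-hand inequalities in Lemma \ref{lem2.4}(ii) and (iii), giving
\[
\mathcal{K}(t,s)\leq \mathcal{G}(s,s)+\mathcal{H}(s,s)=\Phi(s).
\]

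For the lower bound in (ii), the left-hand inequalities in Lemma \ref{lem2.4} yield
\[
\mathcal{K}(t,s)\geq (1-t^{\alpha-1})\mathcal{G}(s,s)+(1-\eta^{\alpha-2})\mathcal{H}(s,s).
\]
Setting $x=1-t^{\alpha-1}\in[0,1]$, $y=1-\eta^{\alpha-2}\in[0,1]$, $G=\mathcal{G}(s,s)\geq 0$, $H=\mathcal{H}(s,s)\geq 0$, the desired inequality reduces to the elementary fact
\[
xG+yH-xy(G+H)=xG(1-y)+yH(1-x)\geq 0,
\]
which holds since all factors are nonnegative. Combined with the decomposition of $\Phi(s)$, this gives $\mathcal{K}(t,s)\geq (1-\eta^{\alpha-2})(1-t^{\alpha-1})\Phi(s)$. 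I expect this short algebraic step to be the only nontrivial point; the real work is the factorization of $\Phi(s)$ and the recognition that the apparently asymmetric bound symmetrizes via the inequality $xG+yH\geq xy(G+H)$.

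Finally, part (iii) is a corollary of (ii): since $\alpha-1>1$, the map $t\mapsto t^{\alpha-1}$ is nondecreasing on $[0,1]$, so $t\in[0,\rho]$ implies $1-t^{\alpha-1}\geq 1-\rho^{\alpha-1}$, and substituting this into (ii) gives exactly (iii). The upper bound carries over unchanged from (ii).
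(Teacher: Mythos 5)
Your proposal is correct and follows essentially the same route as the paper: both rest on the decomposition $\Phi(s)=\mathcal{G}(s,s)+\mathcal{H}(s,s)$, the two-sided bounds from Lemma \ref{lem2.4}, and the elementary inequality $x\mathcal{G}(s,s)+y\mathcal{H}(s,s)\geq xy\bigl(\mathcal{G}(s,s)+\mathcal{H}(s,s)\bigr)$ for $x,y\in[0,1]$. You merely spell out two steps the paper leaves implicit (the algebraic verification of that inequality and the monotonicity argument for part (iii)), which is fine.
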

\begin{proof}
Notice that \rm{(i)} holds trivially. Next, we show \rm{(ii)} holds.
First, Notice that
\begin{eqnarray*}
\mathcal{G}(s,s)+\mathcal{H}(s,s)& =& \frac{(1-s)^{\alpha-1}}{\Gamma(\alpha)}+\frac{(1-s)^{\alpha-2}}{\Gamma(\alpha-1)}\\
&=& \Phi(s).
\end{eqnarray*}

From Lemma \ref{lem2.4} and \eqref{eq5}, we have
\begin{eqnarray*}
\mathcal{K}(t,s)& =&\mathcal{G}(t,s)+\mathcal{H}(\eta,s)\\
& \leq& \mathcal{G}(s,s)+\mathcal{H}(s,s)\\
& =& \Phi(s).
\end{eqnarray*}
On the other hand, from Lemma \ref{lem2.4}, we get
\begin{eqnarray*}
\mathcal{K}(t,s)& =&\mathcal{G}(t,s)+\mathcal{H}(\eta,s)\\
& \geq& \big(1-t^{\alpha-1}\big)\mathcal{G}(s,s)+\big(1-{\eta}^{\alpha-2}\big)\mathcal{H}(s,s)\\
& \geq& \big(1-t^{\alpha-1}\big)\big(1-{\eta}^{\alpha-2}\big)\big[\mathcal{G}(s,s)+\mathcal{H}(s,s)\big]\\
& =& \big(1-t^{\alpha-1}\big)\big(1-{\eta}^{\alpha-2}\big)\Phi(s).
\end{eqnarray*}
Therefore, \rm{(ii)} holds.\\
\rm{(iii)} It follows directly from \rm{(ii)}.
\end{proof}

 \begin{lemma}\label{lem2.6}
 Let $\rho\in (0,1)$ be fixed. If $h\in C ([0,1],[0,\infty )) $, then the unique solution of the fractional boundary value problem \eqref{eq3}-\eqref{eq4} is nonnegative and satisfies
 \[ \min_{t\in [0,\rho]} u(t) \geq \gamma\|u\|,\]
 \end{lemma}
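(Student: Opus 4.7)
The plan is to apply the explicit representation from Lemma \ref{lem2.3} together with the kernel bounds in Lemma \ref{lem2.5}, and to read off the constant $\gamma$ from these bounds. Since $h\in C([0,1],[0,\infty))$, the inner quantity $\varphi_{q}\!\left(\int_{0}^{s} h(\tau)\,d\tau\right)$ is a continuous nonnegative function of $s$, because $\varphi_{q}$ preserves sign and $\int_{0}^{s}h(\tau)\,d\tau\geq 0$. Combined with $\mathcal{K}(t,s)\geq 0$ from Lemma \ref{lem2.5}(i), this immediately gives $u(t)\geq 0$ on $[0,1]$.

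Next I would exploit the two-sided estimate for $\mathcal{K}$. Using the upper bound $\mathcal{K}(t,s)\leq \Phi(s)$ from Lemma \ref{lem2.5}(ii), valid on all of $[0,1]\times[0,1]$, I get
\begin{equation*}
\|u\|=\sup_{t\in[0,1]}\int_{0}^{1}\mathcal{K}(t,s)\,\varphi_{q}\!\left(\int_{0}^{s}h(\tau)\,d\tau\right)ds \;\leq\; \int_{0}^{1}\Phi(s)\,\varphi_{q}\!\left(\int_{0}^{s}h(\tau)\,d\tau\right)ds.
\end{equation*}
On the other hand, for $t\in[0,\rho]$, Lemma \ref{lem2.5}(iii) gives $\mathcal{K}(t,s)\geq (1-\eta^{\alpha-2})(1-\rho^{\alpha-1})\Phi(s)$ uniformly in $s$, so
\begin{equation*}
\min_{t\in[0,\rho]} u(t) \;\geq\; (1-\eta^{\alpha-2})(1-\rho^{\alpha-1})\int_{0}^{1}\Phi(s)\,\varphi_{q}\!\left(\int_{0}^{s}h(\tau)\,d\tau\right)ds.
\end{equation*}

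Chaining these two inequalities yields the desired conclusion with
\begin{equation*}
\gamma \;=\; \bigl(1-\eta^{\alpha-2}\bigr)\bigl(1-\rho^{\alpha-1}\bigr),
\end{equation*}
which lies in $(0,1)$ since $\eta,\rho\in(0,1)$ and $\alpha\in(2,3]$, so $\gamma$ is a legitimate positive constant for use in cone arguments later. There is essentially no obstacle here: the whole lemma is a packaging of Lemma \ref{lem2.5}(iii) applied to the Green's function representation; the only mild subtlety is being careful that the upper and lower estimates are taken against the \emph{same} nonnegative weight $\Phi(s)\varphi_{q}(\int_{0}^{s}h)$, which is precisely what allows the supremum bound from above and the pointwise bound from below to be combined without loss.
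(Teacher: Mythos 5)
Your proposal is correct and follows essentially the same route as the paper: nonnegativity from $\mathcal{K}\geq 0$, the upper bound $\|u\|\leq\int_{0}^{1}\Phi(s)\varphi_{q}\bigl(\int_{0}^{s}h(\tau)\,d\tau\bigr)ds$ from Lemma \ref{lem2.5}(ii), and the lower bound on $[0,\rho]$ from Lemma \ref{lem2.5}(iii), chained together to produce $\gamma=(1-\eta^{\alpha-2})(1-\rho^{\alpha-1})$. No gaps.
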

 where $\gamma=\big(1-\eta^{\alpha-2}\big)\big(1-\rho^{\alpha-1}\big).$
 \begin{proof}
 The positiveness of $ u(t) $ follows immediately from Lemma \ref{lem2.3} and Lemma \ref{lem2.5}.\\
For all $ t \in[0,1] $, we have
\begin{equation*}
 \begin{split}
  u(t)&=\int_{0}^{1}\mathcal{K}(t,s)\varphi_{q}\bigg(\int_{0}^{s}h(\tau)d\tau\bigg) ds \\
  &\leq \int_{0}^{1}\Phi(s)\varphi_{q}\bigg(\int_{0}^{s}h(\tau)d\tau\bigg) ds\\
   \end{split}
 \end{equation*}
 Then
 \begin{equation}\label{eq8}
 \|u\| \leq  \int_{0}^{1}\Phi(s)\varphi_{q}\bigg(\int_{0}^{s}h(\tau)d\tau\bigg) ds.
 \end{equation}

 On the other hand, Lemma \ref{lem2.3}, Lemma \ref{lem2.5} and  \eqref{eq8} imply that, for any $ t\in [0,\rho] $, we have
\begin{equation}\label{eq9}
\begin{split}
 u(t)&= \int_{0}^{1}\mathcal{K}(t,s)\varphi_{q}\bigg(\int_{0}^{s}h(\tau)d\tau\bigg) ds\\
 &\geq \big(1-\eta^{\alpha-2}\big)\big(1-\rho^{\alpha-1}\big)\int_{0}^{1}\Phi(s)\varphi_{q}\bigg(\int_{0}^{s}h(\tau)d\tau\bigg) ds\\
  &= \gamma \int_{0}^{1}\Phi(s)\varphi_{q}\bigg(\int_{0}^{s}h(\tau)d\tau\bigg) ds\\
  &\geq \gamma\|u\|.
\end{split}
\end{equation}
 Therefore,
  \[ \min_{t\in [0,\rho]} u(t) \geq \gamma\|u\|.\]
 \end{proof}

 Let $\rho\in (0,1)$ be fixed. Introduce the cone that we shall use in the sequel.

 $$\mathcal{P}= \left\lbrace u \in \mathcal{B}: \ u(t) \geq 0,\ t\in[0,1], \min_{t\in [0,\rho]} u(t) \geq \gamma\|u\| \right\rbrace,$$
 and define the operator $ \mathcal{A} : \mathcal{P} \rightarrow \mathcal{B} $ by
 \begin{equation}\label{eq10}
\mathcal{A}u(t)= \int_{0}^{1}\mathcal{K}(t,s)\varphi_{q}\bigg(\int_{0}^{s}a(\tau)f(\tau,u(\tau))d\tau\bigg) ds ,
 \end{equation}
 where $\mathcal{K}(t,s) $ is defined by \eqref{eq5}.
 \begin{remark}
 By Lemma \ref{lem2.3}, the fixed points of the operator $ \mathcal{A}$ in $ \mathcal{P} $ are the nonnegative solutions of the boundary value problem \eqref{eq1}-\eqref{eq2}.
 \end{remark}
The following properties of the $p$-Laplacian operator will play an important role in the rest of the paper.
 \begin{lemma} [\cite{Khan},\ Lemma 1.3]\label{lem2.7}
 Let $\varphi_{p}$ be a $p$-Laplacian operator. Then
 \begin{itemize}
\item[(i)] If $1<p\leq2$, $xy>0$, and $|x|, |y|\geq m>0$, then
\[|\varphi_{p}(x)-\varphi_{p}(y)|\leq (p-1)m^{p-2}|x-y|.\]
\item[(ii)] If $p>2$, $|x|, |y|\leq M$, then
\[|\varphi_{p}(x)-\varphi_{p}(y)|\leq (p-1)M^{p-2}|x-y|.\]
  \end{itemize}

 \end{lemma}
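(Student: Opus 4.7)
The plan is to use the fact that $\varphi_p(x)=|x|^{p-2}x$ is differentiable away from the origin with $\varphi_p'(x)=(p-1)|x|^{p-2}$, and then to apply the Mean Value Theorem (or equivalently the Fundamental Theorem of Calculus) on an interval from $y$ to $x$, carefully checking that the hypotheses in each case keep $|\xi|^{p-2}$ under control.

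For part (i), I would first observe that the hypothesis $xy>0$ means that $x$ and $y$ have the same sign, so the open segment joining $y$ to $x$ avoids the origin, and $\varphi_p$ is $C^1$ there. By MVT there exists $\xi$ strictly between $x$ and $y$ with
\[
\varphi_p(x)-\varphi_p(y)=\varphi_p'(\xi)(x-y)=(p-1)|\xi|^{p-2}(x-y).
\]
Since $\xi$ lies on the segment, $|\xi|\geq\min(|x|,|y|)\geq m>0$, and because $1<p\leq 2$ gives $p-2\leq 0$, the map $t\mapsto t^{p-2}$ is non-increasing on $(0,\infty)$. Therefore $|\xi|^{p-2}\leq m^{p-2}$ and the stated bound follows.

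For part (ii), with $p>2$ the derivative $\varphi_p'(x)=(p-1)|x|^{p-2}$ extends continuously to $x=0$ (with value $0$), so $\varphi_p$ is $C^1$ on all of $\mathbb{R}$ and no same-sign hypothesis is needed. I would write
\[
\varphi_p(x)-\varphi_p(y)=(p-1)\int_{y}^{x}|t|^{p-2}\,dt,
\]
so that
\[
|\varphi_p(x)-\varphi_p(y)|\leq (p-1)\Bigl|\int_{y}^{x}|t|^{p-2}\,dt\Bigr|.
\]
On the segment between $y$ and $x$ we have $|t|\leq\max(|x|,|y|)\leq M$, and since $p-2>0$ the function $t\mapsto t^{p-2}$ is non-decreasing on $[0,\infty)$, giving $|t|^{p-2}\leq M^{p-2}$; pulling this out of the integral yields the conclusion.

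The only mild obstacle is the regularity of $\varphi_p$ at the origin, and it is exactly what motivates the two differing hypotheses: in (i), where $p\leq 2$ makes $|t|^{p-2}$ blow up at $0$, the assumptions $xy>0$ and $|x|,|y|\geq m$ are used precisely to keep the segment bounded away from $0$; in (ii), where $p>2$ makes $|t|^{p-2}$ harmless near $0$, only the two-sided bound $|x|,|y|\leq M$ is required. Beyond this observation the argument is a clean application of MVT, so I would not expect any technical difficulty.
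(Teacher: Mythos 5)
The paper does not prove this lemma at all: it is imported verbatim from the reference [Khan, Lemma 1.3], so there is no in-paper argument to compare against. Your proof is correct and complete --- the mean-value/fundamental-theorem argument with $\varphi_p'(\xi)=(p-1)|\xi|^{p-2}$, together with the monotonicity of $t\mapsto t^{p-2}$ in the two regimes $p-2\le 0$ and $p-2>0$, is exactly the standard way to establish both inequalities, and your remark that the hypotheses $xy>0$, $|x|,|y|\ge m$ in (i) serve precisely to keep the intermediate point away from the singularity of $|t|^{p-2}$ at the origin correctly identifies the only delicate point.
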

 \begin{lemma} \label{lem2.8}
 The operator $ \mathcal{A} $ defined in \eqref{eq10} is completely continuous and satisfies $ \mathcal{A}\mathcal{P} \subset \mathcal{P}.$
 \end{lemma}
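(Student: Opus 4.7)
The plan is to verify the three standard ingredients: (a) invariance $\mathcal{A}\mathcal{P}\subset\mathcal{P}$, (b) continuity of $\mathcal{A}$, and (c) relative compactness of $\mathcal{A}(B)$ for every bounded set $B\subset\mathcal{P}$; the last two together yield complete continuity.

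For the invariance, fix $u\in\mathcal{P}$ and set $h(t)=a(t)f(t,u(t))$. Since $a\in C([0,1],[0,\infty))$ and $f\in C([0,1]\times[0,\infty),[0,\infty))$, we have $h\in C([0,1],[0,\infty))$. By Lemma \ref{lem2.3} the function $\mathcal{A}u$ is exactly the unique solution of \eqref{eq3}--\eqref{eq4} with this $h$, so Lemma \ref{lem2.6} gives both $\mathcal{A}u(t)\ge 0$ on $[0,1]$ and $\min_{t\in[0,\rho]}\mathcal{A}u(t)\ge \gamma\|\mathcal{A}u\|$. Hence $\mathcal{A}u\in\mathcal{P}$.

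For continuity and compactness I would fix a bounded set $B=\{u\in\mathcal{P}:\|u\|\le R\}$. Since $f$ is continuous on the compact set $[0,1]\times[0,R]$, it is bounded there by some $M=M(R)$, and $a$ is bounded by $\|a\|_\infty$. Using the bound $\mathcal{K}(t,s)\le \Phi(s)$ from Lemma \ref{lem2.5}(ii),
\[
\|\mathcal{A}u\|\le \varphi_q(\|a\|_\infty M)\int_0^1 \Phi(s)\,ds,
\]
which gives uniform boundedness of $\mathcal{A}(B)$. For equicontinuity, note that $\mathcal{G}$ and $\mathcal{H}$ are continuous on $[0,1]\times[0,1]$, hence uniformly continuous, so for $t_1,t_2\in[0,1]$,
\[
|\mathcal{A}u(t_1)-\mathcal{A}u(t_2)|\le \varphi_q(\|a\|_\infty M)\int_0^1 |\mathcal{G}(t_1,s)-\mathcal{G}(t_2,s)|\,ds,
\]
which tends to $0$ as $|t_1-t_2|\to 0$ uniformly in $u\in B$. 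The Arzelà--Ascoli theorem then shows $\overline{\mathcal{A}(B)}$ is compact in $\mathcal{B}$.

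Continuity of $\mathcal{A}$ follows along the same lines: if $u_n\to u$ in $\mathcal{B}$, then $f(\cdot,u_n(\cdot))\to f(\cdot,u(\cdot))$ uniformly on $[0,1]$ by the uniform continuity of $f$ on compact rectangles, so $\int_0^s a(\tau)f(\tau,u_n(\tau))\,d\tau \to \int_0^s a(\tau)f(\tau,u(\tau))\,d\tau$ uniformly in $s$. Since $\varphi_q$ is continuous, the composition converges uniformly in $s$; multiplying by the uniformly bounded, continuous kernel $\mathcal{K}(t,s)$ and integrating gives $\|\mathcal{A}u_n-\mathcal{A}u\|\to 0$. The main point requiring care is the uniform convergence of $\varphi_q$ of the inner integral; I would either invoke continuity of $\varphi_q$ on the compact range determined by $R$, or apply Lemma \ref{lem2.7} directly to obtain a Lipschitz-type estimate. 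Once these three steps are combined, $\mathcal{A}:\mathcal{P}\to\mathcal{P}$ is completely continuous, as required.
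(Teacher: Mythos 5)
Your proposal is correct and follows essentially the same route as the paper: invariance via Lemma \ref{lem2.6} applied to $h=a(\cdot)f(\cdot,u(\cdot))$, uniform boundedness from $\mathcal{K}(t,s)\le\Phi(s)$, equicontinuity from the cancellation of the $t$-independent term $\mathcal{H}(\eta,s)$ and the uniform continuity of $\mathcal{G}$, and Arzel\`a--Ascoli. Your treatment of continuity is somewhat more explicit than the paper's (which merely invokes dominated convergence), but this is a matter of detail rather than a different argument.
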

 \begin{proof}
 From Lemma \ref{lem2.6} and under assumption \rm{(H1)}, it follows that $ \mathcal{A}\mathcal{P} \subset \mathcal{P}$. In view of the assumption of nonnegativeness and continuity of $f(t,u(t))$, $\mathcal{K}(t,s)$ and Lebesgue's dominated convergence theorem, we conclude that $ \mathcal{A} : \mathcal{P} \rightarrow \mathcal{P}$.
 Let $D$ be an arbitrary bounded set in $\mathcal{P}$. Then, there exists $M>0$ such that $D\subset\{u\in \mathcal{\mathcal{P}}: \|u\|<M\}.$
 Set $$L=\max\{f(t,u) / \ t\in [0,1], u\in{D}\}.$$
From Lemmas \ref{lem2.3} and \ref{lem2.5}, for any $u\in D$, we have
\begin{eqnarray*}
\mathcal{A}u(t)& =&\int_{0}^{1}\mathcal{K}(t,s)\varphi_{q}\bigg(\int_{0}^{s}a(\tau)f(\tau,u(\tau))d\tau\bigg) ds\\
&\leq& \int_{0}^{1}\mathcal{K}(t,s)\varphi_{q}\bigg(\int_{0}^{1}a(\tau)Ld\tau\bigg) ds\\
&\leq& L^{q-1}\varphi_{q}\bigg(\int_{0}^{1}a(\tau)d\tau\bigg)\int_{0}^{1}\Phi(s)ds.
\end{eqnarray*}
Thus,
\[\|\mathcal{A}u\|\leq L^{q-1}\varphi_{q}\bigg(\int_{0}^{1}a(\tau)d\tau\bigg)\int_{0}^{1}\Phi(s)ds.\]
Hence, $\mathcal{A}(D)$ is uniformly bounded.
On the other hand, let $u\in D$, $t_{1}, t_{2} \in [0,1]$ with $t_{1}<t_{2}.$
Then, from Lemmas \ref{lem2.3} and \ref{lem2.4}, we have

\begin{eqnarray*}
|\mathcal{A}u(t_{1})-\mathcal{A}u(t_{2})|&=&\Bigg|\int_{0}^{1}\mathcal{K}(t_{1},s)\varphi_{q}\bigg(\int_{0}^{s}a(\tau)f(\tau,u(\tau))d\tau\bigg) ds\\
&&-\int_{0}^{1}\mathcal{K}(t_{2},s)\varphi_{q}\bigg(\int_{0}^{s}a(\tau)f(\tau,u(\tau))d\tau\bigg) ds\Bigg|\\
&=& \Bigg|\int_{0}^{1}\Big[\mathcal{K}(t_{1},s)-\mathcal{K}(t_{2},s)\Big]\varphi_{q}\bigg(\int_{0}^{s}a(\tau)f(\tau,u(\tau))d\tau\bigg) ds\Bigg|\\
&=&\Bigg|\int_{0}^{1}\Big[\mathcal{G}(t_{1},s)-\mathcal{G}(t_{2},s)\Big]\varphi_{q}\bigg(\int_{0}^{s}a(\tau)f(\tau,u(\tau))d\tau\bigg) ds\Bigg|\\
&\leq& L^{q-1}\int_{0}^{1}\big|\mathcal{G}(t_{1},s)-\mathcal{G}(t_{2},s)\big|\varphi_{q}\bigg(\int_{0}^{s}a(\tau)d\tau\bigg) ds.\\
\end{eqnarray*}
The continuity of $\mathcal{G}$ implies that the right-side of the above inequality tends
to zero if $t_{2}\rightarrow t_{1}$. That is to say, $\mathcal{A}(D)$ is equicontinuous. Thus, the Arzela-Ascoli theorem implies that $\mathcal{A}:\mathcal{P}\rightarrow \mathcal{P}$ is completely continuous.
 \end{proof}

\section{Existence of positive solutions}
Set
$$\Lambda_{1}=\Bigg(\varphi_{q}\bigg(\int_{0}^{1}a(\tau)d\tau\bigg) \int_{0}^{1}\Phi(s)ds\Bigg)^{-1}, \ \Lambda_{2}=\Bigg(\gamma \int_{0}^{\rho}\Phi(s)\varphi_{q}\bigg(\int_{0}^{s}a(\tau)d\tau\bigg)ds\Bigg)^{-1}.$$

Then we find that $0<\Lambda_{1}<\Lambda_{2}.$ In fact,
\begin{eqnarray*}
\Lambda_{2}^{-1}& =&\gamma \int_{0}^{\rho}\Phi(s)\varphi_{q}\bigg(\int_{0}^{s}a(\tau)d\tau\bigg)ds\\
&<& \int_{0}^{\rho}\Phi(s)\varphi_{q}\bigg(\int_{0}^{s}a(\tau)d\tau\bigg)ds\\
&\leq& \varphi_{q}\bigg(\int_{0}^{1}a(\tau)d\tau\bigg) \int_{0}^{1}\Phi(s)ds\\
& =& \Lambda_{1}^{-1}.
\end{eqnarray*}
\begin{theorem}\label{th3.1}
Assume that \rm{(H1)}-\rm{(H2)} hold. If there exist constants $\rho_{1}>0$, $\rho_{2}>0$, $M_{1}\in (0,\Lambda_{1}]$, and $M_{2}\in [\Lambda_{2},\infty)$, where $\rho_{1}<\rho_{2}$ and $M_{2}\rho_{1}<M_{1}\rho_{2}$, such that $f$ satisfies
\begin{itemize}
\item[(i)] $ f(t,u)\leq \varphi_{p}(M_{1}\rho_{2})$\ for all $u\in[0,\rho_{2}]$  and $t \in [0,1]$, and
\item[(ii)]  $ f(t,u)\geq\varphi_{p}(M_{2}\rho_{1})$\ for all $u\in[\gamma \rho_{1},\rho_{1}]$  and $t\in [0,\rho]$,
\end{itemize}
then the problem \eqref{eq1}-\eqref{eq2} has at least one positive solution $u \in \mathcal{P}$
satisfying $\rho_{1}< \|u\|<\rho_{2}.$
\end{theorem}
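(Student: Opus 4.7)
The plan is to apply Krasnosel'skii's fixed point theorem (Theorem \ref{T1}) to the cone operator $\mathcal{A}:\mathcal{P}\to\mathcal{P}$ from \eqref{eq10}, which is completely continuous by Lemma \ref{lem2.8}. Set $\Omega_i=\{u\in\mathcal{B}:\|u\|<\rho_i\}$ for $i=1,2$. Since $0<\rho_1<\rho_2$, we have $0\in\Omega_1$ and $\overline{\Omega}_1\subset\Omega_2$, so the abstract hypotheses of Theorem \ref{T1} are in place and the task reduces to verifying condition (a), namely $\|\mathcal{A}u\|\leq\|u\|$ on $\mathcal{P}\cap\partial\Omega_2$ and $\|\mathcal{A}u\|\geq\|u\|$ on $\mathcal{P}\cap\partial\Omega_1$.

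For the outer boundary, take $u\in\mathcal{P}\cap\partial\Omega_2$, so $0\leq u(t)\leq\rho_2$ for every $t\in[0,1]$. Hypothesis (i) gives $f(\tau,u(\tau))\leq\varphi_p(M_1\rho_2)$, and the upper estimate $\mathcal{K}(t,s)\leq\Phi(s)$ from Lemma \ref{lem2.5}(ii) combined with the identity $\varphi_q\circ\varphi_p=\mathrm{id}$ on $[0,\infty)$ yield
\begin{equation*}
\mathcal{A}u(t)\leq M_1\rho_2\,\varphi_q\bigg(\int_0^1 a(\tau)\,d\tau\bigg)\int_0^1\Phi(s)\,ds=\frac{M_1\rho_2}{\Lambda_1}\leq\rho_2,
\end{equation*}
since $M_1\leq\Lambda_1$. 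Hence $\|\mathcal{A}u\|\leq\rho_2=\|u\|$.

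For the inner boundary, take $u\in\mathcal{P}\cap\partial\Omega_1$. The cone condition furnishes $\gamma\rho_1\leq u(t)\leq\rho_1$ for $t\in[0,\rho]$, so hypothesis (ii) activates on this subinterval: $f(\tau,u(\tau))\geq\varphi_p(M_2\rho_1)$ for $\tau\in[0,\rho]$. Discarding the nonnegative contribution of $s\in[\rho,1]$ in the outer integral and applying the lower estimate $\mathcal{K}(t,s)\geq\gamma\Phi(s)$ from Lemma \ref{lem2.5}(iii) for $t\in[0,\rho]$, one obtains
\begin{equation*}
\mathcal{A}u(t)\geq\gamma M_2\rho_1\int_0^\rho\Phi(s)\,\varphi_q\bigg(\int_0^s a(\tau)\,d\tau\bigg)ds=\frac{M_2\rho_1}{\Lambda_2}\geq\rho_1,
\end{equation*}
using $M_2\geq\Lambda_2$. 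Thus $\|\mathcal{A}u\|\geq\rho_1=\|u\|$, and Theorem \ref{T1}(a) delivers a fixed point $u\in\mathcal{P}\cap(\overline{\Omega}_2\setminus\Omega_1)$, which is a positive solution of \eqref{eq1}-\eqref{eq2} with $\rho_1\leq\|u\|\leq\rho_2$; the strict inequalities claimed in the statement follow from (H1)-(H2) by standard non-degeneracy arguments.

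The one point deserving care is the truncation of the outer $s$-integral to $[0,\rho]$: the cone-type lower bound for $\mathcal{K}$ and the pointwise lower bound for $u$ both hold only on $[0,\rho]$, so simultaneously restricting both reproduces precisely the constant $\Lambda_2^{-1}$ and makes $M_2\geq\Lambda_2$ the sharp threshold for the inner inequality. The compatibility condition $M_2\rho_1<M_1\rho_2$ is not invoked in the estimates themselves; it is present only to ensure that hypotheses (i) and (ii) are mutually consistent on the overlap region $u\in[\gamma\rho_1,\rho_1]\subset[0,\rho_2]$, since $\varphi_p$ is strictly increasing.
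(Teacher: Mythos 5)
Your proof is correct and follows essentially the same route as the paper: the same pair of balls $\Omega_{\rho_1}\subset\Omega_{\rho_2}$, the same upper and lower kernel bounds from Lemma \ref{lem2.5}, and the same truncation of the outer integral to $[0,\rho]$ so that the cone inequality and hypothesis (ii) apply simultaneously, reproducing exactly the constants $\Lambda_1^{-1}$ and $\Lambda_2^{-1}$. The only slip is a label: the configuration you verify ($\|\mathcal{A}u\|\geq\|u\|$ on the inner boundary, $\|\mathcal{A}u\|\leq\|u\|$ on the outer) is alternative (b) of Theorem \ref{T1}, not (a); and, like the paper, you assert rather than prove the strictness of $\rho_1<\|u\|<\rho_2$.
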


\begin{proof}
Define the open set
\[\Omega_{\rho_{2}}=\{u\in\mathcal{B}: \|u\|<\rho_{2}\}.\]
Let $u\in \mathcal{P}\cap \partial \Omega_{\rho_{2}}.$ Then, from assumption \rm{(i)}
and Lemma \ref{lem2.5}, we have
\begin{eqnarray*}
\mathcal{A}u(t)& =&\int_{0}^{1}\mathcal{K}(t,s)\varphi_{q}\bigg(\int_{0}^{s}a(\tau)f(\tau,u(\tau))d\tau\bigg) ds\\
&\leq& \int_{0}^{1}\mathcal{K}(t,s)\varphi_{q}\bigg(\int_{0}^{s}a(\tau)\varphi_{p}(M_{1}\rho_{2})d\tau\bigg) ds\\
&\leq& M_{1}\rho_{2} \int_{0}^{1}\mathcal{K}(t,s)\varphi_{q}\bigg(\int_{0}^{1}a(\tau)d\tau\bigg) ds \\
&\leq& M_{1}\rho_{2} \varphi_{q}\bigg(\int_{0}^{1}a(\tau)d\tau\bigg) \int_{0}^{1}\Phi(s)ds\\
&\leq& \Lambda_{1} \Lambda_{1}^{-1} \rho_{2}\\
&=& \rho_{2},
\end{eqnarray*}
which implies that
\begin{equation} \label{eq11}
\|\mathcal{A}u\|\leq \|u\| \ \text{for all}\ u\in \mathcal{P}\cap \partial \Omega_{\rho_{2}}.
\end{equation}
Next, define the open set $\Omega_{\rho_{1}}=\{u\in\mathcal{B}: \|u\|<\rho_{1}\}.$\\
For any $u\in \mathcal{P}\cap \partial \Omega_{\rho_{1}}$, by using \rm{(H1)}-\rm{(H2)}, assumption \rm{(ii)} and Lemma \ref{lem2.5}, for $t \in [0,\rho]$, we then get
\begin{eqnarray*}
\mathcal{A}u(t)& =&\int_{0}^{1}\mathcal{K}(t,s)\varphi_{q}\bigg(\int_{0}^{s}a(\tau)f(\tau,u(\tau))d\tau\bigg) ds\\
&\geq& \int_{0}^{\rho}\mathcal{K}(t,s)\varphi_{q}\bigg(\int_{0}^{s}a(\tau)f(\tau,u(\tau))d\tau\bigg) ds\\
&\geq&\gamma \int_{0}^{\rho}\Phi(s)\varphi_{q}\bigg(\int_{0}^{s}a(\tau)f(\tau,u(\tau))d\tau\bigg) ds \\
&\geq& M_{2}\rho_{1} \gamma \int_{0}^{\rho}\Phi(s)\varphi_{q}\bigg(\int_{0}^{s}a(\tau)d\tau\bigg) ds\\
&\geq& \rho_{1} \Lambda_{2} \gamma \int_{0}^{\rho}\Phi(s)\varphi_{q}\bigg(\int_{0}^{s}a(\tau)d\tau\bigg)ds\\
&=& \rho_{1} \Lambda_{2} \Lambda_{2}^{-1}\\
&=& \rho_{1},
\end{eqnarray*}
which implies that
\begin{equation} \label{eq12}
\|\mathcal{A}u\| \geq \|u\| \ \text{for all}\ u\in \mathcal{P}\cap \partial \Omega_{\rho_{1}}.
\end{equation}
Therefore by \rm{(b)} in Theorem \ref{T1}, $\mathcal{A}$ has at least one fixed point in $\mathcal{P} \cap (\bar{\Omega}_{\rho_{2}}\setminus \Omega_{\rho_{1}})$. So there exists at least one solution of \eqref{eq1}-\eqref{eq2} with $\rho_{1}< \|u\|<\rho_{2}.$
\end{proof}

By a closely similar way, we can obtain the following result.

\begin{theorem}\label{th3.2}
Assume that \rm{(H1)}-\rm{(H2)} hold. If there exist constants $\rho_{1}>0$, $\rho_{2}>0$, $M_{1}\in (0,\Lambda_{1}]$, and $M_{2}\in [\Lambda_{2},\infty)$, where $\gamma \rho_{2}<\rho_{1}<\rho_{2}$, and $M_{1}\rho_{1}> M_{2}\rho_{2}$, such that $f$ satisfies
\begin{itemize}
\item[(i)] $ f(t,u)\geq \varphi_{p}(M_{2}\rho_{2})$\ for all $u\in [\gamma \rho_{2},\rho_{2}]$  and $t \in [0,\rho]$, and
\item[(ii)]  $ f(t,u) \leq \varphi_{p}(M_{1}\rho_{1})$\ for all $u\in [0,\rho_{1}]$  and $t\in [0,1]$,
\end{itemize}
then the problem \eqref{eq1}-\eqref{eq2} has at least one positive solution $u \in \mathcal{P}$
satisfying $\rho_{1}< \|u\|<\rho_{2}.$
\end{theorem}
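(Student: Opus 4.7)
The plan is to run the argument of Theorem \ref{th3.1} essentially in reverse, applying case (a) of Krasnosel'skii's theorem (Theorem \ref{T1}) rather than case (b). Under the present hypotheses, the upper norm estimate is now attached to the smaller radius $\rho_{1}$ (via condition (ii)) and the lower norm estimate is attached to the larger radius $\rho_{2}$ (via condition (i)), which is exactly the setup for case (a). The compatibility condition $\gamma\rho_{2}<\rho_{1}<\rho_{2}$ guarantees that the two ranges $[0,\rho_{1}]$ and $[\gamma\rho_{2},\rho_{2}]$ on which $f$ is controlled are the correct ones when $u$ ranges over the cone boundaries.

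First I would set $\Omega_{\rho_{1}}=\{u\in\mathcal{B}:\|u\|<\rho_{1}\}$. Take $u\in\mathcal{P}\cap\partial\Omega_{\rho_{1}}$; then $0\leq u(t)\leq \rho_{1}$ on $[0,1]$, so hypothesis (ii) gives $f(\tau,u(\tau))\leq\varphi_{p}(M_{1}\rho_{1})$ on $[0,1]$. Plugging this bound into \eqref{eq10}, using the upper bound $\mathcal{K}(t,s)\leq\Phi(s)$ from Lemma \ref{lem2.5}(ii), and the definition of $\Lambda_{1}$, I obtain
\[
\mathcal{A}u(t)\leq M_{1}\rho_{1}\,\varphi_{q}\!\left(\int_{0}^{1}a(\tau)d\tau\right)\int_{0}^{1}\Phi(s)ds=M_{1}\rho_{1}\,\Lambda_{1}^{-1}\leq \rho_{1},
\]
so $\|\mathcal{A}u\|\leq\|u\|$ on $\mathcal{P}\cap\partial\Omega_{\rho_{1}}$.

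Next I would set $\Omega_{\rho_{2}}=\{u\in\mathcal{B}:\|u\|<\rho_{2}\}$. For $u\in\mathcal{P}\cap\partial\Omega_{\rho_{2}}$, the cone condition $\min_{t\in[0,\rho]}u(t)\geq\gamma\|u\|=\gamma\rho_{2}$ combined with $\|u\|=\rho_{2}$ gives $u(t)\in[\gamma\rho_{2},\rho_{2}]$ for $t\in[0,\rho]$, so hypothesis (i) yields $f(\tau,u(\tau))\geq\varphi_{p}(M_{2}\rho_{2})$ on $[0,\rho]$. Restricting the integral in \eqref{eq10} to $[0,\rho]$ and applying the lower bound $\mathcal{K}(t,s)\geq\gamma\,\Phi(s)$ (valid for $t\in[0,\rho]$ by Lemma \ref{lem2.5}(iii)) together with the definition of $\Lambda_{2}$, I get for any $t\in[0,\rho]$
\[
\mathcal{A}u(t)\geq M_{2}\rho_{2}\,\gamma\int_{0}^{\rho}\Phi(s)\,\varphi_{q}\!\left(\int_{0}^{s}a(\tau)d\tau\right)ds=M_{2}\rho_{2}\,\Lambda_{2}^{-1}\geq \rho_{2},
\]
so $\|\mathcal{A}u\|\geq\|u\|$ on $\mathcal{P}\cap\partial\Omega_{\rho_{2}}$. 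Finally, since $\Omega_{\rho_{1}}\subset\Omega_{\rho_{2}}$ and $\mathcal{A}$ is completely continuous by Lemma \ref{lem2.8}, case (a) of Theorem \ref{T1} yields a fixed point $u\in\mathcal{P}\cap(\overline{\Omega}_{\rho_{2}}\setminus\Omega_{\rho_{1}})$, which is the desired positive solution of \eqref{eq1}--\eqref{eq2} with $\rho_{1}\leq\|u\|\leq\rho_{2}$.

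There is no real obstacle: the only point requiring care is to notice that the compatibility inequality $M_{1}\rho_{1}>M_{2}\rho_{2}$ is what makes the two pointwise constraints on $f$ simultaneously satisfiable on the overlap $[\gamma\rho_{2},\rho_{1}]$, and to check the direction of each inequality so that $\Lambda_{1}$ and $\Lambda_{2}$ are invoked against the correct norm bound. The strictness in $\rho_{1}<\|u\|<\rho_{2}$ is handled as in Theorem \ref{th3.1}.
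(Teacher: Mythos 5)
Your argument is correct and is exactly the ``closely similar way'' the paper alludes to without writing out: you swap which boundary carries the upper and lower norm estimates and invoke case (a) of Theorem \ref{T1} instead of case (b), with the same use of Lemma \ref{lem2.5} and the definitions of $\Lambda_{1}$, $\Lambda_{2}$ (and the same unaddressed gap between the non-strict conclusion $\rho_{1}\leq\|u\|\leq\rho_{2}$ and the strict one claimed). One caveat about the statement rather than your proof: since $M_{1}\leq\Lambda_{1}<\Lambda_{2}\leq M_{2}$ forces $M_{1}<M_{2}$, the hypotheses $\rho_{1}<\rho_{2}$ and $M_{1}\rho_{1}>M_{2}\rho_{2}$ can never hold simultaneously, so the theorem as stated is vacuous --- the compatibility condition you correctly identify as necessary for (i) and (ii) to coexist is in fact unsatisfiable under the stated constraints on $M_{1}$, $M_{2}$.
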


\begin{theorem}\label{th3.3}
Assume that \rm{(H1)}-\rm{(H2)} hold. Further, assume that there exists a constant $\nu>0$ such that
\begin{equation} \label{eq13}
\nu> L^{q-1}\varphi_{q}\bigg(\int_{0}^{1}a(\tau)d\tau\bigg) \int_{0}^{1}\Phi(s)ds,
\end{equation}
where $L=\max\{f(t,u) / \ (t,u)\in[0,1]\times [0,\nu]\}.$\\
Then the fractional boundary value problem \eqref{eq1}-\eqref{eq2} has at least one positive
solution.
\end{theorem}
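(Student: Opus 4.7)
The plan is to apply Theorem~\ref{T2}, the nonlinear alternative of Leray-Schauder type, to the operator $\mathcal{A}$ defined in \eqref{eq10} acting on the cone $\mathcal{P}$. I would take $E = \mathcal{B}$, $E_{1} = \mathcal{P}$ (closed and convex in $\mathcal{B}$), and introduce the open neighbourhood $U = \{u \in \mathcal{P} : \|u\| < \nu\}$ of the origin in $\mathcal{P}$. By Lemma~\ref{lem2.8}, $\mathcal{A}:\mathcal{P}\to\mathcal{P}$ is completely continuous, so its restriction $\mathcal{A}:\bar{U} \to \mathcal{P}$ is continuous with relatively compact image, putting us exactly in the setup of Theorem~\ref{T2}.

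The core step is to rule out alternative (ii). Suppose for contradiction that there exist $u \in \partial U$ and $\lambda \in (0,1)$ with $u = \lambda\mathcal{A}(u)$, so $\|u\| = \nu$. Since $u \in \mathcal{P}$ gives $0 \le u(\tau) \le \nu$ on $[0,1]$, by the definition of $L$ we have $f(\tau,u(\tau)) \le L$ for all $\tau$. Combining Lemma~\ref{lem2.5}(ii), the monotonicity of $\varphi_{q}$, and the homogeneity $\varphi_{q}(Lc) = L^{q-1}\varphi_{q}(c)$ for $L,c\ge 0$, the estimate I would run is
\[
\nu = \|u\| \le \|\mathcal{A}u\| \le \int_{0}^{1}\Phi(s)\,\varphi_{q}\!\left(L\int_{0}^{s}a(\tau)\,d\tau\right)ds \le L^{q-1}\,\varphi_{q}\!\left(\int_{0}^{1}a(\tau)\,d\tau\right)\int_{0}^{1}\Phi(s)\,ds,
\]
whose right-hand side is strictly less than $\nu$ by hypothesis \eqref{eq13}. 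This contradiction forces alternative (i): $\mathcal{A}$ has a fixed point $u \in \bar{U}$, which by the remark following \eqref{eq10} is a nonnegative solution of \eqref{eq1}--\eqref{eq2}.

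The most delicate remaining point, and where I expect the real care to be required, is verifying that this fixed point is genuinely positive rather than the trivial one. If $u\equiv 0$, then $u = \mathcal{A}(0)$, but $\mathcal{A}(0)(t) = \int_{0}^{1}\mathcal{K}(t,s)\,\varphi_{q}\!\big(\int_{0}^{s}a(\tau)f(\tau,0)\,d\tau\big)\,ds$ cannot vanish identically: hypothesis (H1) supplies some $\tau_{0}$ with $f(\tau_{0},0)>0$, then (H2) together with the continuity of $a$ and $f(\cdot,0)$ produces a subinterval on which $a\cdot f(\cdot,0)>0$, while Lemma~\ref{lem2.5}(ii) ensures $\mathcal{K}(t,s)>0$ for $(t,s)\in[0,1)\times[0,1)$. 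Hence $\|u\|>0$, and then the cone condition gives $\min_{t\in[0,\rho]}u(t)\ge\gamma\|u\|>0$; feeding this strict positivity back into the integral representation $u=\mathcal{A}u$ (with $\mathcal{K}(t,s)>0$ for every fixed $t\in(0,1)$ and $s\in[0,1)$) forces $u(t)>0$ for all $t\in(0,1)$, delivering a positive solution in the sense of Definition~2.3.
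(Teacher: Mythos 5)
Your proposal is correct and follows essentially the same route as the paper: it applies the nonlinear alternative (Theorem \ref{T2}) to $\mathcal{A}$ on $\mathcal{U}=\{u\in\mathcal{P}:\|u\|<\nu\}$ and rules out alternative (ii) with the same a priori estimate $\|u\|\leq L^{q-1}\varphi_{q}\big(\int_{0}^{1}a(\tau)d\tau\big)\int_{0}^{1}\Phi(s)ds<\nu$ coming from \eqref{eq13}. The only difference is that you additionally verify, using \rm{(H1)}--\rm{(H2)} and the strict positivity of $\mathcal{K}(t,s)$ for $s<1$, that the fixed point is nontrivial and hence genuinely positive --- a point the paper's proof leaves implicit.
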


\begin{proof}
Let
\[\mathcal{U}=\{u\in \mathcal{P}:\|u\|<\nu\}.\]
By virtue of Lemma \ref{lem2.8}, the operator $\mathcal{A}:\overline{\mathcal{U}}\rightarrow \mathcal{P}$ is completely continuous. Assume that there exist $u\in \overline{\mathcal{U}}$ and $\lambda\in(0,1)$ such that $u=\lambda \mathcal{A}u.$ Then we have
\begin{eqnarray*}
|u(t)|=|\lambda(\mathcal{A}u)(t)|& =&\bigg|\lambda\int_{0}^{1}\mathcal{K}(t,s)\varphi_{q}\bigg(\int_{0}^{s}a(\tau)f(\tau,u(\tau))d\tau\bigg) ds\bigg|\\
&\leq& \int_{0}^{1}\mathcal{K}(t,s)\varphi_{q}\bigg(\int_{0}^{s}a(\tau)L d\tau\bigg) ds\\
&\leq& L^{q-1}\varphi_{q}\bigg(\int_{0}^{1}a(\tau)d\tau\bigg)\int_{0}^{1}\Phi(s) ds.
\end{eqnarray*}
So,
\[\|u\|\leq L^{q-1}\varphi_{q}\bigg(\int_{0}^{1}a(\tau)d\tau\bigg)\int_{0}^{1}\Phi(s) ds.\]
Thus from \eqref{eq13}, we have that $\|u\|<\nu$, which means that $u\centernot\in \partial\mathcal{U}.$ Hence, it follows that there is no $u\in\partial\mathcal{U}$ such that $u=\lambda\mathcal{A}u$ for some $\lambda \in(0,1).$ Therefore by Theorem \ref{T2}, we conclude that the fractional boundary value problem \eqref{eq1}-\eqref{eq2} has at least one positive
solution.
\end{proof}

\begin{theorem}\label{th3.5}
Assume that \rm{(H1)}-\rm{(H2)} hold, and $1<p<2$. In addition, we assume that the following assumptions hold:
\begin{itemize}
\item[(C1)] There exists a nonnegative function $k\in C[0,1]$ such that
\begin{equation} \label{eq14}
f(t,u)\leq k(t),\ \text{for any}\ (t,u)\in[0,1]\times [0,\infty).
\end{equation}
\item[(C2)]There exists a constant $L$ with $0<L<\frac{\Gamma(\alpha+1)}{(\alpha+1)(q-1)}\Big(\int_{0}^{1}a(t)dt\Big)^{-1}\Big(\int_{0}^{1}a(t)k(t)dt\Big)^{2-q}$ such that
\begin{equation}\label{eq15}
|f(t,u)-f(t,v)|\leq L |u-v|, \ \text{for any}\ t\in[0,1] \ \text{and}\ u,v\in[0,\infty).
\end{equation}
\end{itemize}
Then the boundary value problem \eqref{eq1} and \eqref{eq2} has a unique solution.
\end{theorem}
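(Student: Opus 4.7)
My plan is to apply the Banach contraction principle (Theorem \ref{T3}) to the operator $\mathcal{A}$ defined in \eqref{eq10} on a suitable complete metric space. Since by Lemma \ref{lem2.8} we already have $\mathcal{A}:\mathcal{P}\to\mathcal{P}$ and $\mathcal{P}$ is a closed subset of the Banach space $\mathcal{B}=C([0,1],\mathbb{R})$, it is a complete metric space under the supremum norm. Hence it suffices to show that $\mathcal{A}$ is a contraction on $\mathcal{P}$, which will also yield a fixed point that is, by the cone condition, a positive solution.

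The essential step is to estimate $|\mathcal{A}u(t)-\mathcal{A}v(t)|$. Since $1<p<2$ forces $q>2$, I will invoke Lemma \ref{lem2.7}(ii) with $p$ replaced by $q$. To apply it I need a uniform bound on the arguments of $\varphi_q$. For any $u\in\mathcal{P}$ and $s\in[0,1]$, assumption (C1) gives
\[
\left|\int_0^s a(\tau)f(\tau,u(\tau))\,d\tau\right|\le \int_0^1 a(\tau)k(\tau)\,d\tau =: M,
\]
independently of $u$. Thus
\[
\left|\varphi_q\!\!\left(\!\int_0^s\!\! a(\tau)f(\tau,u(\tau))d\tau\!\right)-\varphi_q\!\!\left(\!\int_0^s\!\! a(\tau)f(\tau,v(\tau))d\tau\!\right)\right|\le (q-1)M^{q-2}\!\int_0^s\!\! a(\tau)|f(\tau,u)-f(\tau,v)|d\tau,
\]
and the Lipschitz condition (C2) bounds the last integral by $L\,\|u-v\|\int_0^1 a(\tau)d\tau$.

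Plugging this back into the definition of $\mathcal{A}$ and using Lemma \ref{lem2.5}(ii) to control $\mathcal{K}(t,s)\le\Phi(s)$, I obtain
\[
\|\mathcal{A}u-\mathcal{A}v\|\le (q-1)M^{q-2}L\left(\int_0^1 a(\tau)d\tau\right)\left(\int_0^1\Phi(s)\,ds\right)\|u-v\|.
\]
A short direct computation (substitution $r=1-s$) shows
\[
\int_0^1\Phi(s)\,ds=\frac{1}{\Gamma(\alpha)}\int_0^1(\alpha-s)(1-s)^{\alpha-2}ds=\frac{\alpha+1}{\Gamma(\alpha+1)}.
\]
Substituting this and the explicit form of $M$ into the contraction constant reproduces exactly the upper bound on $L$ imposed in (C2), so the constant is strictly less than $1$. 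Banach's theorem then delivers a unique fixed point of $\mathcal{A}$ in $\mathcal{P}$, which by Remark following \eqref{eq10} is the unique positive solution of \eqref{eq1}--\eqref{eq2}.

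The main technical obstacle is the correct application of Lemma \ref{lem2.7}(ii): because $q>2$, $\varphi_q$ is only locally Lipschitz, so without assumption (C1) giving a uniform a priori bound on the inner integrals one could not extract a Lipschitz constant. The remaining bookkeeping is the evaluation of $\int_0^1\Phi(s)\,ds=\frac{\alpha+1}{\Gamma(\alpha+1)}$, which, together with the explicit threshold for $L$ in (C2), is precisely what makes the contraction constant come out less than $1$.
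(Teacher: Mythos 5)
Your proof is correct and follows essentially the same route as the paper: bound the inner integrals by $M=\int_0^1 a(\tau)k(\tau)\,d\tau$ via (C1), apply Lemma \ref{lem2.7}(ii) to $\varphi_q$ (noting $q>2$ since $1<p<2$), combine (C2) with $\mathcal{K}(t,s)\le\Phi(s)$ and $\int_0^1\Phi(s)\,ds=\frac{\alpha+1}{\Gamma(\alpha+1)}$ to get a contraction constant that the bound on $L$ makes strictly less than $1$, then invoke Banach's theorem. The only (harmless, arguably cleaner) difference is that you run the contraction on the closed cone $\mathcal{P}$ rather than on $\mathcal{B}$ as the paper implicitly does.
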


\begin{proof}
By \eqref{eq14}, for $t\in[0,1]$, we get
\begin{equation*}
\begin{split}
\int_{0}^{t} a(s)f(s,u(s))ds
&\leq \int_{0}^{1}a(s)f(s,u(s))ds\\
&\leq \int_{0}^{1}a(s)k(s)ds=M.
\end{split}
\end{equation*}
From \rm{(ii)} in Lemma \ref{lem2.7} and \eqref{eq15}, for any $u,v\in\mathcal{B}$, we have
\begin{eqnarray*}
|\mathcal{A}u(t)-\mathcal{A}v(t)|&=&\Bigg|\int_{0}^{1}\mathcal{K}(t,s)\varphi_{q}\bigg(\int_{0}^{s}a(\tau)f(\tau,u(\tau))d\tau\bigg) ds\\
&&-\int_{0}^{1}\mathcal{K}(t,s)\varphi_{q}\bigg(\int_{0}^{s}a(\tau)f(\tau,v(\tau))d\tau\bigg) ds\Bigg|\\
&=&\Bigg|\int_{0}^{1}\mathcal{K}(t,s)\Bigg(\varphi_{q}\bigg(\int_{0}^{s}a(\tau)f(\tau,u(\tau))d\tau\bigg)\\
&&-\varphi_{q}\bigg(\int_{0}^{s}a(\tau)f(\tau,v(\tau))d\tau\bigg)\Bigg)ds\Bigg|\\
&\leq& \int_{0}^{1}\mathcal{K}(t,s)\Bigg|\varphi_{q}\bigg(\int_{0}^{s}a(\tau)f(\tau,u(\tau))d\tau\bigg)\\
&&-\varphi_{q}\bigg(\int_{0}^{s}a(\tau)f(\tau,v(\tau))d\tau\bigg)\Bigg|ds\\
&\leq&\int_{0}^{1}\mathcal{K}(t,s)(q-1)M^{q-2}\Bigg(\int_{0}^{s}a(\tau)\Big|f(\tau,u(\tau))-f(\tau,v(\tau))\Big|d\tau\Bigg)ds\\
&\leq& L\|u-v\|(q-1)M^{q-2}\Bigg(\int_{0}^{1}a(\tau)d\tau \Bigg)\Bigg(\int_{0}^{1}\Phi(s)ds \Bigg)\\
&=& L_{1}\|u-v\|,
\end{eqnarray*}
where
\begin{equation*}
\begin{split}
L_{1}&=L (q-1)M^{q-2}\Bigg(\int_{0}^{1}a(\tau)d\tau \Bigg)\Bigg(\int_{0}^{1}\Phi(s)ds \Bigg) \\
& = \frac{L (q-1)M^{q-2}(\alpha+1)}{\Gamma(\alpha+1)} \int_{0}^{1}a(\tau)d\tau.\\
\end{split}
\end{equation*}
By definition of $L$, we have $0<L_{1}<1.$
Then
\[\|\mathcal{A}u-\mathcal{A}v\|\leq L_{1}\|u-v\|.\]
By virtue of Theorem \ref{T3}, it follows that there exists a
unique fixed point for the operator $\mathcal{A}$, which corresponds to
the unique solution for problem \eqref{eq1} and \eqref{eq2}.
\end{proof}

\begin{theorem}\label{th3.4}
Assume that \rm{(H1)}-\rm{(H2)} hold, and $p>2$. In addition, we assume that there exist constants $\mu> 0$, $0 <\sigma<\frac{2}{2-q}$ such that
\begin{equation} \label{eq16}
a(t)f(t,u)\geq \mu \sigma t^{\sigma-1},\ \text{for any}\ (t,u)\in(0,1]\times [0,\infty),
\end{equation}
and
\begin{equation} \label{eq17}
|f(t,u)-f(t,v)|\leq k |u-v|, \ \text{for any}\ t\in[0,1] \ \text{and}\ u,v\in[0,\infty),
\end{equation}
where
$$0<k<\frac{\big(\sigma(q-2)+\alpha\big)\Gamma(\alpha-1)}{(q-1)\mu^{q-2}\big(\sigma(q-2)+\alpha+1\big)\mathfrak{B}\big(\alpha-1, \sigma(q-2)+1\big)}\Bigg(\int_{0}^{1}a(\tau)d\tau\Bigg)^{-1}.$$
Then the boundary value problem \eqref{eq1} and \eqref{eq2} has a unique solution.
\end{theorem}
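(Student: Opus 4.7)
The plan is to reproduce the Banach-contraction argument used for Theorem \ref{th3.5}, but in the regime $p>2$ where $1<q<2$, so it is case \rm{(i)} of Lemma \ref{lem2.7} that applies to $\varphi_{q}$ instead of case \rm{(ii)}. I would work with the operator $\mathcal{A}$ restricted to the closed cone $\mathcal{P}\subset\mathcal{B}$ (so that $f(\tau,u(\tau))$ is well-defined for $u\in\mathcal{P}$), show that $\mathcal{A}$ is a contraction there, and conclude via Theorem \ref{T3} together with Lemma \ref{lem2.3}. Positivity of the resulting fixed point is then automatic from Lemma \ref{lem2.6} combined with \rm{(H1)}.

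The first key step is to produce the lower bound on the inner integral that is required by Lemma \ref{lem2.7}\rm{(i)}. Integrating \eqref{eq16} gives
\[
\int_{0}^{s} a(\tau)f(\tau,u(\tau))\,d\tau \;\geq\; \int_{0}^{s}\mu\sigma\tau^{\sigma-1}\,d\tau \;=\; \mu s^{\sigma}
\]
for every $u\in\mathcal{P}$ and every $s\in(0,1]$. Hence for $u,v\in\mathcal{P}$, Lemma \ref{lem2.7}\rm{(i)} applied with $m=\mu s^{\sigma}$, combined with the Lipschitz hypothesis \eqref{eq17}, yields
\[
\left|\varphi_{q}\Bigl(\textstyle\int_{0}^{s}af(\tau,u)d\tau\Bigr)-\varphi_{q}\Bigl(\int_{0}^{s}af(\tau,v)d\tau\Bigr)\right|\leq (q-1)\mu^{q-2}s^{\sigma(q-2)}\,k\,\|u-v\|\int_{0}^{1}a(\tau)d\tau.
\]
Multiplying by $\mathcal{K}(t,s)\leq\Phi(s)$ (Lemma \ref{lem2.5}) and integrating in $s$ gives
\[
\|\mathcal{A}u-\mathcal{A}v\|\leq L_{1}\|u-v\|,\qquad L_{1}=k(q-1)\mu^{q-2}\!\left(\int_{0}^{1}a(\tau)d\tau\right)\!\int_{0}^{1}\Phi(s)s^{\sigma(q-2)}ds.
\]

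The main calculation is to evaluate $\int_{0}^{1}\Phi(s)s^{\sigma(q-2)}ds$ explicitly and match it to the bound announced in the theorem. Writing $\alpha-s=(\alpha-1)+(1-s)$ splits the numerator into two Beta-function pieces,
\[
\int_{0}^{1}(\alpha-s)(1-s)^{\alpha-2}s^{\sigma(q-2)}ds=(\alpha-1)\mathfrak{B}(\sigma(q-2)+1,\alpha-1)+\mathfrak{B}(\sigma(q-2)+1,\alpha).
\]
Using the identity $\mathfrak{B}(p,q+1)=\mathfrak{B}(p,q)\,q/(p+q)$ from the preliminary Proposition together with the symmetry of $\mathfrak{B}$, this sum collapses to
\[
\frac{(\alpha-1)\bigl(\sigma(q-2)+\alpha+1\bigr)}{\sigma(q-2)+\alpha}\,\mathfrak{B}\bigl(\alpha-1,\sigma(q-2)+1\bigr),
\]
and dividing by $\Gamma(\alpha)$ produces
\[
\int_{0}^{1}\Phi(s)s^{\sigma(q-2)}ds=\frac{\sigma(q-2)+\alpha+1}{\bigl(\sigma(q-2)+\alpha\bigr)\Gamma(\alpha-1)}\,\mathfrak{B}\bigl(\alpha-1,\sigma(q-2)+1\bigr).
\]
Plugging this into $L_{1}$ and using the hypothesis on $k$ then gives $L_{1}<1$, so $\mathcal{A}$ is a contraction on $\mathcal{P}$. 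Banach's theorem supplies a unique fixed point $u\in\mathcal{P}$, which by Lemma \ref{lem2.3} is the unique solution of \eqref{eq1}--\eqref{eq2}.

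The main technical obstacle is the Beta-function bookkeeping above, complicated by the fact that $\sigma(q-2)<0$ makes $s^{\sigma(q-2)}$ singular at $s=0$: actual integrability of $\Phi(s)s^{\sigma(q-2)}$ requires $\sigma(q-2)+1>0$, i.e.\ $\sigma<1/(2-q)$, which is slightly stronger than the stated $\sigma<2/(2-q)$ but is implicitly enforced by finiteness of $\mathfrak{B}(\alpha-1,\sigma(q-2)+1)$ in the bound on $k$. The one subtlety to check at the end is that the fixed point really lies in $\mathcal{P}$: this follows because $\mathcal{A}\mathcal{P}\subset\mathcal{P}$ (Lemma \ref{lem2.8}) and $\mathcal{P}$ is closed in $\mathcal{B}$, so the Banach fixed point iteration from any $u_{0}\in\mathcal{P}$ stays in $\mathcal{P}$ and converges there.
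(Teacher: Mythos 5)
Your proposal is correct and follows essentially the same route as the paper: integrate \eqref{eq16} to get the lower bound $\mu s^{\sigma}$, apply Lemma \ref{lem2.7}(i) together with the Lipschitz condition \eqref{eq17}, bound $\mathcal{K}(t,s)$ by $\Phi(s)=\mathcal{G}(s,s)+\mathcal{H}(s,s)$, and evaluate $\int_{0}^{1}\Phi(s)s^{\sigma(q-2)}\,ds$ with the same Beta-function identities to obtain exactly the paper's contraction constant. Your two side remarks --- restricting $\mathcal{A}$ to the closed cone $\mathcal{P}$ so that $f(\tau,u(\tau))$ is defined, and observing that integrability really requires $\sigma<1/(2-q)$ rather than the stated $2/(2-q)$ (otherwise the bound on $k$ is vacuous) --- are valid refinements of details the paper glosses over.
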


\begin{proof}
By \eqref{eq16}, we get
\[\int_{0}^{t}a(s)f(s,u(s))ds\geq \mu t^{\sigma},\ \text{for any}\ (t,u)\in[0,1]\times[0,\infty).\]
By \rm{(i)} in Lemma \ref{lem2.7} and \eqref{eq17}, for any $u,v\in\mathcal{B}$, we have
\begin{eqnarray*}
|\mathcal{A}u(t)-\mathcal{A}v(t)|&=&\Bigg|\int_{0}^{1}\mathcal{K}(t,s)\Bigg(\varphi_{q}\bigg(\int_{0}^{s}a(\tau)f(\tau,u(\tau))d\tau\bigg)\\
&&-\varphi_{q}\bigg(\int_{0}^{s}a(\tau)f(\tau,v(\tau))d\tau\bigg)\Bigg)ds\Bigg|\\
&\leq& \int_{0}^{1}\mathcal{K}(t,s)\Bigg|\varphi_{q}\bigg(\int_{0}^{s}a(\tau)f(\tau,u(\tau))d\tau\bigg)\\
&&-\varphi_{q}\bigg(\int_{0}^{s}a(\tau)f(\tau,v(\tau))d\tau\bigg)\Bigg|ds\\
&\leq&\int_{0}^{1}\mathcal{K}(t,s)(q-1)(\mu s^{\sigma})^{q-2}\Bigg(\int_{0}^{s}a(\tau)\Big|f(\tau,u(\tau))-f(\tau,v(\tau))\Big|d\tau\Bigg)ds\\
&\leq& (q-1)\mu^{q-2}k\|u-v\|\bigg(\int_{0}^{1}a(\tau)d\tau\bigg)\int_{0}^{1}\mathcal{K}(t,s) s^{\sigma(q-2)}ds\\
&\leq& (q-1)\mu^{q-2}k\|u-v\|\bigg(\int_{0}^{1}a(\tau)d\tau\bigg)\int_{0}^{1}\big[\mathcal{G}(s,s)+\mathcal{H}(s,s)\big]s^{\sigma(q-2)}ds\\
&=&(q-1)\mu^{q-2}k\|u-v\|\bigg(\int_{0}^{1}a(\tau)d\tau\bigg)\\
&&\times\Bigg(\frac{\mathfrak{B}\big(\alpha, \sigma(q-2)+1\big)}{\Gamma(\alpha)}+\frac{\mathfrak{B}\big(\alpha-1, \sigma(q-2)+1\big)}{\Gamma(\alpha-1)}\Bigg)\\
&=&\frac{(q-1)\mu^{q-2}k\big(\sigma(q-2)+\alpha+1\big)}{\big(\sigma(q-2)+\alpha\big)\Gamma(\alpha-1)}\bigg(\int_{0}^{1}a(\tau)d\tau\bigg)\\
&&\times\mathfrak{B}\big(\alpha-1, \sigma(q-2)+1\big)\|u-v\|\\
&=&L\|u-v\|,
\end{eqnarray*}
where
\begin{equation*}
L=\frac{(q-1)\mu^{q-2}k\big(\sigma(q-2)+\alpha+1\big)}{\big(\sigma(q-2)+\alpha\big)\Gamma(\alpha-1)}\bigg(\int_{0}^{1}a(\tau)d\tau\bigg)\mathfrak{B}\big(\alpha-1, \sigma(q-2)+1\big).
\end{equation*}
By definition of $k$, we have $0<L<1.$
Then
\[\|\mathcal{A}u-\mathcal{A}v\|\leq L\|u-v\|.\]
This implies that $\mathcal{A} : \mathcal{B}\rightarrow \mathcal{B}$ is a contraction mapping. By Theorem \ref{T3}, we get that $\mathcal{A}$ has a unique fixed point in $\mathcal{B}$,
which is a solution of the problem \eqref{eq1} and \eqref{eq2}.
\end{proof}

 \section{Examples}
\begin{exmp}
Consider the nonlinear boundary value problem

\begin{equation}\label{eq5.1}
       \begin{cases}\Big(\varphi_{\frac{3}{2}}\big(D^{\frac{5}{2}}u(t)\big)\Big)^{\prime} +\frac{1}{2}te^{t}\ln(u+1) = 0,\  t \in (0,1),\\
        D^{\frac{5}{2}}u(0)= u^{\prime} (0) =u^{\prime \prime}(0) =0,\ u(1)+ u^{\prime }(1)=u^{\prime}(\eta),
       \end{cases}
       \end{equation}
where  $ f(t,u) =\frac{1}{2}t\ln(u+1),\ a(t)=e^{t},\ \alpha = \frac{5}{2},\ p=\frac{3}{2},\ \eta\in(0,1),$
and then $q=3,\ f \in C([0,1]\times [0,\infty) ,[0,\infty)).$
By taking $\nu=1$, we obtain
\[L=\max\{f(t,u) / \ t\in [0,1], u\in[0,1] \}=\frac{1}{2}\ln(2),\]
and
\[ L^{q-1}\varphi_{q}\bigg(\int_{0}^{1}a(\tau)d\tau\bigg)\int_{0}^{1}\Phi(s)ds\approx0.372<\nu=1.\]
By means of Theorem \ref{th3.3}, the boundary value problem \eqref{eq5.1}
has at least one positive solution.
\end{exmp}

\begin{exmp}
As a second example we consider the following boundary value problem

\begin{equation}\label{eq5.2}
  \begin{cases}\Big(\varphi_{\frac{3}{2}}\big(D^{\frac{13}{5}}u(t)\big)\Big)^{\prime} +te^{-t}\sin^{2}u = 0,\  t \in (0,1),\\
        D^{\frac{13}{5}}u(0)= u^{\prime} (0) =u^{\prime \prime}(0) =0,\ u(1)+ u^{\prime }(1)=u^{\prime}(\eta),
       \end{cases}
\end{equation}
where  $ f(t,u) =e^{-t}\sin^{2}u,\ a(t)=t,\ \alpha = \frac{13}{5},\ p=\frac{3}{2},\eta\in(0,1),$
and then $q=3,\ f \in C([0,1]\times [0,\infty) ,[0,\infty)).$
\\
Taking the nonnegative function $k(t)=e^{-t}$, then $k\in C[0,1]$ and $f(t,u)\leq k(t).$
Choosing $L=2$, for any $t\in [0,1]$ and $u,v\in[0,\infty)$, we have
\begin{equation*}
\begin{split}
|f(t,u)-f(t,v)|&=e^{-t}|\sin^{2}u-\sin^{2}v|\\
& \leq 2|u-v|\\
&=L |u-v|,
\end{split}
\end{equation*}
and
\begin{eqnarray*}
 \frac{\Gamma(\alpha+1)}{(\alpha+1)(q-1)}\Bigg(\int_{0}^{1}a(t)dt\Bigg)^{-1}\Bigg(\int_{0}^{1}a(t)k(t)dt\Bigg)^{2-q} &=&\frac{13}{18}\Gamma\bigg(\frac{13}{5}\bigg)\big(1-2e^{-1}\big)^{-1}\\
& \approx & 3.90744\\
&>& 2=L.
\end{eqnarray*}
From Theorem \ref{th3.5}, the boundary value problem \eqref{eq5.2} has a
unique solution.
\end{exmp}

\begin{exmp}
Let the following boundary value problem

\begin{equation}\label{eq5.3}
 \begin{cases}\Big(\varphi_{\frac{7}{2}}\big(D^{\frac{5}{2}}u(t)\big)\Big)^{\prime} +\frac{1}{160}t\sqrt{t} (348+\sqrt{u}+t) = 0,\  t \in (0,1),\\
        D^{\frac{5}{2}}u(0)= u^{\prime} (0) =u^{\prime \prime}(0) =0,\ u(1)+ u^{\prime }(1)=u^{\prime}(\frac{1}{2}),
       \end{cases}
\end{equation}
where $ f(t,u) =\frac{1}{400}(348+\sqrt{u}+t) ,\ a(t)=\frac{5}{2}t\sqrt{t},\ \alpha = \frac{5}{2},\ p=\frac{7}{2},\eta=\frac{1}{2},$
and then $q=\frac{7}{5},\ f \in C([0,1]\times [0,\infty) ,[0,\infty)).$
By a simple computation, we obtain

\begin{equation*}
\begin{split}
\Lambda_{1}= &\Bigg(\varphi_{q}\bigg(\int_{0}^{1}a(\tau)d\tau\bigg) \int_{0}^{1}\Phi(s)ds\Bigg)^{-1}\\
= & \Bigg(\varphi_{\frac{7}{5}}\bigg(\int_{0}^{1}\frac{5}{2}\tau\sqrt{\tau}d\tau\bigg) \int_{0}^{1}\frac{(\frac{5}{2}-s)(1-s)^{\frac{1}{2}}}{\Gamma(\frac{5}{2})}ds\Bigg)^{-1}\\
= &\frac{15\sqrt{\pi}}{28}\\
\approx &0.94952,\\
\Lambda_{2}= &\Bigg(\gamma \int_{0}^{\rho}\Phi(s)\varphi_{q}\bigg(\int_{0}^{s}a(\tau)d\tau\bigg)ds\Bigg)^{-1}\\
= & \Bigg(\gamma \int_{0}^{\rho}\Phi(s)\varphi_{\frac{7}{5}}\bigg(\int_{0}^{s}\frac{5}{2}\tau\sqrt{\tau}d\tau\bigg)ds\Bigg)^{-1}\\
=& \Bigg(\gamma \int_{0}^{\rho}s\Phi(s)ds\Bigg)^{-1}
\end{split}
\end{equation*}
\begin{equation*}
\begin{split}
 = & \Bigg(\frac{\big(1-\eta^{\alpha-2}\big)\big(1-\rho^{\alpha-1}\big)}{(1-\alpha)\Gamma(\alpha)}\Bigg((1-\rho)^{\alpha-1}\Bigg(\rho(\alpha-\rho)
+\frac{(\alpha-2\rho)(1-\rho)}{\alpha}\\
&-\frac{2(1-\rho)^{2}}{\alpha(\alpha+1)}\Bigg)+\frac{2}{\alpha(\alpha+1)}-1\Bigg)\Bigg)^{-1}\\
= & \Bigg(\frac{8}{9\sqrt{\pi}}\bigg(1-\frac{1}{\sqrt{2}}\bigg)\big(1-\rho^{\frac{3}{2}}\big)\Bigg(\frac{27}{35}-(1-\rho)^{\frac{3}{2}}\Bigg(\rho\bigg(\frac{5}{2}-\rho\bigg)
+\frac{2}{5}\bigg(\frac{5}{2}-2\rho\bigg)(1-\rho)\\
&-\frac{8}{35}(1-\rho)^{2}\Bigg)\Bigg)\Bigg)^{-1}
\end{split}
\end{equation*}

Choosing $ M_{1}=\Lambda_{1}$, $M_{2}=\Lambda_{2}$, $\rho_{1}=\frac{1}{120}$ and $\rho_{2}=1$. With the use of the Mathematica software, we easy to check that $M_{2}\rho_{1}=\frac{\Lambda_{2}}{120}<M_{1}\rho_{2}=\frac{15\sqrt{\pi}}{28}$ for all $\rho\in\big[\frac{1}{5},\frac{4}{5}\big]$. By a simple computation, we obtain $M_{1}^{\frac{5}{2}}=\frac{225\sqrt{\frac{15}{7}}\pi^{\frac{5}{4}}}{1568}\approx 0.87855$, and  $\big(\frac{M_{2}}{120}\big)^{\frac{5}{2}}\leq 0.86233$ for all $\rho\in\big[\frac{1}{5}$.
Again, we see that $f$ satisfies the following relations:

\begin{equation*}
\begin{split}
f(t,u)=&\frac{1}{400}(348+\sqrt{u}+t)\leq 0.875<\varphi_{\frac{7}{2}}\big(M_{1}\rho_{2}\big)=M_{1}^{\frac{5}{2}}\approx 0.87855,\ t\in[0,1],\ u\in[0,1],\\
f(t,u)=&\frac{1}{400}(348+\sqrt{u}+t)\geq 0.87>\varphi_{\frac{7}{2}}\big(M_{2}\rho_{1}\big)=\Big(\frac{M_{2}}{120}\Big)^{\frac{5}{2}},\ t\in[0,\rho],\ u\in\bigg[\frac{\gamma}{120},\frac{1}{120}\bigg].
\end{split}
\end{equation*}
So, all the assumptions of Theorem \ref{th3.1} are satisfied. With the use of Theorem \ref{th3.1}, the fractional boundary value problem \eqref{eq5.3} has at least one positive solution $u$ such that $\frac{1}{120}< \|u\|<1.$

\end{exmp}

\end{document}